\newtheorem{corollary}{Corollary}
\newtheorem{definition}{Definition}
\newtheorem{example}{Example}
\newtheorem{lemma}{Lemma}
\newtheorem{remark}{Remark}
\newtheorem{theorem}{Theorem}
\newcommand{\prob}[1]{\mathbb{P}\left(#1\right)}
\def\Z{\mathbb{Z}}
\def\ds{\displaystyle}
\newcommand{\abs}[1]{\left|#1\right|}
\newcommand{\se}[1]{\prec_{#1}}
\begin{document}

\title{Deterministic walk in an excited random environment}

\author{Ivan Matic}
\address{Department of Mathematics \\
	Baruch College \\
	New York, NY 10010, USA}

\author{David Sivakoff}
\address{
	Department of Statistics and Department of Mathematics \\
	Ohio State University \\
	Columbus, OH 43210, USA}

\date{\today}

\begin{abstract} Deterministic walk in an excited random environment is a non-Markov integer-valued process $(X_n)_{n=0}^{\infty}$, whose jump at time $n$ depends on the number of visits to the site $X_n$. 
The environment can be understood as stacks of cookies on each site of $\mathbb Z$. Once all cookies are consumed at a given site, every subsequent visit will result in a walk taking a step according to the direction prescribed by the last consumed cookie. If each site has exactly one cookie, then the walk ends in a loop if it ever visits the same site twice. If the number of cookies per site is increased to two, the walk can visit a site infinitely many times and still not end in a loop. Nevertheless the moments of $X_n$ are sub-linear in $n$ and we establish monotonicity results on the environment that imply large deviations.
\end{abstract}

\maketitle

\section{Introduction}
The deterministic walk in excited random environment in one dimension is a discrete time process, $(X_t)_{t=0}^{\infty}: \Omega \to \Z^{\{0, 1, \ldots\}}$.  For $L,M \in \mathbb N$, the set of environments is
\begin{align*}
\Omega = \Omega(L,M)=& \left\{\omega\in [-L,L]^{\Z_{\ge 0} \times \Z}\right.: \\
& \qquad \omega(j,z)=\omega(M-1,z) \text{ for each } j\geq M-1 \text{ and each } \left. z\in\Z \right\},
\end{align*}
where $[a, b] := \{a, a+1, \ldots, b\}$.  We imagine $\Omega$ as stacks of $M$ {\em cookies}, $\omega(0,z), \dots, \omega(M-1,z)$, at each site $z\in \Z$, each with an arrow pointing to the right or to the left by at most $L$.  We assume that $\Omega$ is equipped with the product measure $\mathbb{P} = \mathbb{P}_{L,M}$ such that $\{\omega(j, z) : j\in [0,M-1], z\in \Z\}$ are i.i.d.~with distribution $\mu$ supported on $[-L,L]$.  Note the abuse of notation here, that $\omega\in \Omega$ is both an element of the set of environments, and a random element (via the identity map) with distribution $\mathbb{P}$.  We further assume that $\mu(k)>0$ for all $k\in [-L,L]$.

To define the {\em deterministic walk in excited random environment}, first let $L_t(z) = L_t(\omega,z)$ denote the number of times that the walker visited $z$ in the time interval $[0,t-1]$, $$L_t(z)=\left|\left\{
0\leq j<t:X_j=z
\right\}\right|,$$
where $|A|$ denotes the cardinality of the set $A$. For each $\omega \in \Omega$, we define $X_t = X_t(\omega)$ recursively~as
$$
\begin{aligned}
X_0 &= 0,\\
X_{t+1} &= X_t + \omega(L_t(X_t), X_t).
\end{aligned}
$$

 
The main result of this paper is the large deviations estimate of the probability that $X_n$ is located at a distance of order $O(n)$ from the origin.

\begin{theorem}\label{large_deviations} Fix $M\geq 3$. There exists a function $\phi:[0,L]\to\mathbb (-\infty,0]$ such that for each~$\lambda\in \mathbb [0,L]$ 
\begin{eqnarray}
\label{main_limit}\lim_{n\to\infty}\frac1n\log\prob{X_n\geq \lambda n}=\phi(\lambda).
\end{eqnarray}
\end{theorem}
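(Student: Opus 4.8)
The plan is to deduce \eqref{main_limit} from Fekete's subadditive lemma. I will exhibit an auxiliary quantity $q_n$ that is comparable to $\prob{X_n\ge\lambda n}$ on the exponential scale and for which $\log q_n$ is superadditive, so that $\phi(\lambda):=\lim_n\frac1n\log q_n=\sup_n\frac1n\log q_n$ exists by Fekete's lemma and equals the limit claimed in \eqref{main_limit}. Let me first note that $\prob{X_n\ge\lambda n}$ is positive and not too small: on the event that the walk consumes only $+L$ cookies it satisfies $X_t=Lt$ for all $t\le n$ and visits the distinct sites $0,L,\dots,(n-1)L$, an event of probability $\mu(L)^n>0$ on which $X_n=Ln\ge\lambda n$ whenever $\lambda\le L$. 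Hence $n\log\mu(L)\le\log\prob{X_n\ge\lambda n}\le 0$, the eventual rate lies in $[\log\mu(L),0]$, and at $\lambda=L$ it equals $\log\mu(L)$ exactly because then $\prob{X_n\ge Ln}=\mu(L)^n$.

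The source of the superadditivity is a regeneration at the first passage times of new levels. For $k\ge 0$ put $\tau_k=\inf\{t:X_t\ge k\}$. Since each increment lies in $[-L,L]$ we have $X_{\tau_k}\in\{k,\dots,k+L-1\}$, and since $X_s<k$ for every $s<\tau_k$, the value of $\tau_k$ and the whole trajectory $(X_0,\dots,X_{\tau_k})$ are functions of the cookies at sites $\le k-1$ only. Consequently, on $\{\tau_k<\infty\}$ and conditionally on $\mathcal F_{\tau_k}=\sigma(X_0,\dots,X_{\tau_k})$, the cookies at all sites $\ge k$ are still i.i.d.\ with law $\mu$; therefore, as long as the walk stays at heights $\ge k$ after time $\tau_k$, it evolves exactly like a fresh copy of the process translated to begin at $X_{\tau_k}$. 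This makes $q_n:=\prob{\tau_{\lceil\lambda n\rceil}\le n\ \text{and}\ X_s\ge 0\text{ for all }s\le\tau_{\lceil\lambda n\rceil}}$ exactly supermultiplicative. Indeed, on the event defining $q_n$ the walk has stayed nonnegative and sits at height $X_T\ge\lceil\lambda n\rceil$ at time $T=\tau_{\lceil\lambda n\rceil}\le n$; conditionally on $\mathcal F_T$, the probability that it reaches level $\lceil\lambda n\rceil+\lceil\lambda m\rceil$ (which is $\ge\lceil\lambda(n+m)\rceil$) within $m$ further steps while staying $\ge\lceil\lambda n\rceil$ is at least $q_m$, because from time $T$ on it is a fresh walk started at height $X_T\ge\lceil\lambda n\rceil$ and starting higher only helps; and on this joint event $X_s\ge 0$ for every $s$ up to the first passage to level $\lceil\lambda(n+m)\rceil$. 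Hence $q_{n+m}\ge q_nq_m$, and (as $q_n\ge\mu(L)^n$) the limit $\lim_n\frac1n\log q_n$ exists in $[\log\mu(L),0]$.

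It then remains to check that $\prob{X_n\ge\lambda n}$ has the same exponential rate as $q_n$, and this is the step I expect to be the main obstacle. The inequality $\prob{X_n\ge\lambda n}\le\prob{\tau_{\lceil\lambda n\rceil}\le n}$ is immediate, but on the right-hand event the walk may descend below $0$ into already-consumed territory, where the regeneration argument breaks; and for the matching lower bound one must control the walk \emph{after} it first reaches level $\lceil\lambda n\rceil$, since it may fall back below that level before time $n$. This is exactly where the monotonicity results for the environment come in: by comparing the walk with walks run in environments modified into a reflecting barrier below a chosen level --- modifications under which the resulting trajectories are monotone in the relevant partial order --- one sandwiches $\prob{X_n\ge\lambda n}$ between quantities that do regenerate cleanly and that carry the common rate $\lim_n\frac1n\log q_n$, the excursions that would violate the barrier costing at most a factor $e^{o(n)}$. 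A few further reductions of the same subexponential order --- rounding effects in the ceilings, the bounded overshoot $X_{\tau_k}-\lceil\lambda n\rceil\in\{0,\dots,L-1\}$, a regeneration block starting at height in $\{0,\dots,L-1\}$ rather than $0$, and remainders when a block length fails to divide $n$ --- are handled routinely, for instance by pre- or appending $O(1)$ forced rightward steps or by a further appeal to monotonicity, and do not affect $\phi(\lambda)$.
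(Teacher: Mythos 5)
Your supermultiplicativity step is essentially the paper's argument: the paper defines $A_n := \{T_{\lambda n} \leq n,\ \inf_{k\leq T_{\lambda n}} X_k \geq 0\}$ (the same event as your $q_n$ up to the rounding in $\lceil\lambda n\rceil$), proves $\prob{A_{n+m}}\geq \prob{A_n}\prob{A_m}$ by a regeneration at $T_{\lambda n}$ exactly as you describe, and the lower bound $\prob{X_n\geq\lambda n}\geq C\,\prob{T_{\lambda n}\leq n}$ is handled very simply by forcing the $M$ cookies at $\lambda n$ to all be zero so the walk traps there. That part of your plan is correct and matches the paper.

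The genuine gap is the step you yourself flag as the obstacle: proving that $\prob{T_{\lambda n}\leq n}$ and $\prob{A_n}$ have the same exponential rate, i.e.\ that the no-backtracking constraint only costs a subexponential factor. Your sketch --- ``comparing the walk with walks run in environments modified into a reflecting barrier below a chosen level, modifications under which the resulting trajectories are monotone in the relevant partial order'' --- does not describe an argument that works for this model, and is not what the paper does. There is no generic monotone coupling here: jumps have size up to $L$, the walk is deterministic given $\omega$, and changing a single cookie can alter the entire subsequent trajectory nonmonotonically. The paper instead performs an explicit environment surgery. It introduces a partial order $\omega'\prec_{\ell,m}\omega$ (the two environments agree on sites $>m$, the two walks traverse the same sites in $[m,\ell]$ in the same order, and $X_{[0,T_\ell]}(\omega')$ is a subsequence of $X_{[0,T_\ell]}(\omega)$) and a ``raising stacks'' lemma that rewires one cookie to splice out a portion of the path. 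The crux (Lemma~\ref{main_lemma_decreasing}) is then a \emph{counting argument}: for a minimal element $\sigma$ of $\{T_{\lambda n}\leq n\}$ under $\prec_{\lambda n, 2L\sqrt n}$, if the walk under $\sigma$ ever visits a negative site $x$, one shows there must exist sites $x<a_1<\cdots<a_I$ with $I\geq\sqrt n$ such that all $M$ cookies at each $a_i$ are consumed before $a_{i-1}$ is revisited, and every excursion between consecutive visits to each $a_i$ exits $[0,2L\sqrt n]$ and hence takes at least $\sqrt n$ steps. Since $M\geq 3$ forces at least one such long excursion per $a_i$ to occur \emph{before} $V_x$, this yields $V_x\geq I\sqrt n\geq n$, a contradiction. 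Therefore each $\omega\in\{T_{\lambda n}\leq n\}$ can be mapped to an element of $A_n$ by changing only cookies in $[0,2L\sqrt n]$, and a map-counting/probability-comparison estimate gives $\prob{T_{\lambda n}\leq n}\leq C^{\sqrt n}\prob{A_n}$. None of this is present in your proposal, and it is also the only place the standing hypothesis $M\geq 3$ enters (your proposal never explains why $M\geq 3$ is assumed). Until this bridge is supplied, the argument is incomplete.
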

\begin{remark}
The assumption of an i.i.d.~environment can be weakened, and we make this assumption merely for the ease of exposition.  For instance, the proof of Theorem~\ref{large_deviations} holds with minor modification if the cookies at a given site are jointly distributed such that every combination of $M$ cookies has strictly positive probability, while the cookies at distinct sites are independent and identically distributed. In particular, this includes the case where each `layer' of cookies has a different distribution.  Also with minor modifications to the proofs, all of our results can be established under the weaker assumption that  $\mu(k)>0$ for $k\in [-L,L]\setminus\{0\}$. 
\end{remark}
\begin{remark}
The function $\phi$ is concave on $[0,L]$, with $\phi(0)=0$ and $\phi(\lambda)<0$ for $\lambda \in (0, L]$.  This is proved in section~\ref{concave}.
\end{remark}
\begin{remark}
We expect Theorem~\ref{large_deviations} to hold when $M=2$, and can prove that it does when $L\leq 2$.  However, our proof for $M\geq 3$ does not work when $M=2$ and $L\geq 3$.  The case $M=1$ was proved using a different method in~\cite{matic11}.
\end{remark}

The model studied in this paper traces its origins to the study of stochastic partial differential equations. The viscosity solutions to random Hamilton-Jacobi and Hamilton-Jacobi-Bellman equations can be represented using variational formulas \cite{scott_hung_yifeng_2014, krv, souganidis_1999}. The controls in the formulas are solutions to ordinary differential equations or stochastic differential equations in random environments whose discrete analogs are deterministic walks in random environments (DWRE) and random walks in random environments (RWRE), respectively \cite{rezakhanlou_rwre}. 

DWERE is a non-Markov process that generalizes DWRE in the same way as RWERE generalizes RWRE by allowing several cookies on each site. Large deviations for RWRE were studied in the past and various results were obtained \cite{petersonejp12,  rassoulaghaseppalainenyilmaz13, varadhan04, yilmaz11, yilmazzeitouni10}. The approaches from these papers cannot be applied to DWRE or DWERE because  the latter models do not possess the quenched ellipticity property.  
Results related to the laws of large numbers for non-elliptic random walks were established in  \cite{denhollanderdossantossidoravicius13}. 
In the case of RWRE, one can assume ellipticity and use the point of view of the particle to see the process as a Markov chain on a probability space with sufficient compactness to apply the Donsker--Varadhan theory \cite{varadhan_rwre, yilmaz11}.  Large deviations analogous to Theorem~\ref{large_deviations}, but in all dimensions, were proved for DWRE by an analysis of loops\cite{matic11}. However, this loop analysis is not applicable to large deviations of DWERE. 
The process that we are studying is also related to the Lorentz mirror model. In the case of the mirror problem it is conjectured that the paths are almost surely finite \cite{grimmett_menshikov_volkov_labyrinths} which in our model is an easy consequence of Lemma \ref{lemma_bound_annulus}. Recent progress on the mirror problem establishes a lower bound on the probability that the rays reach distances of order $n$  \cite{kozma_sidoravicius_mirror}.

RWERE was introduced in the paper of Benjamini and Wilson \cite{benjamini_wilson}. In more general versions of excited random walks, the number of cookies per site is greater than one. A number of results were established about recurrence, balisticity, monotonicity, and return times to zero \cite{elena_tom_11, elena_martin_08, elena_martin_13, elena_martin_14,  peterson_monotonicity}. Some of these excited random walks are known to converge to Brownian motion \cite{dima_elena_12}. Large deviations for random walks in excited random environments were studied in \cite{peterson12}. In the case of random walks in excited random environments very little is known in dimensions greater than 1 and in the cases when the steps are not nearest-neighbor. Our main proof is also restricted to dimension 1, however we are allowing our walk to make jumps of sizes bigger than 1. 

The case $M=1$ corresponds to DWRE and Theorem \ref{large_deviations} can be obtained in arbitrary dimension $d$ \cite{matic11}. The main argument of the proof used the fact that once the walk visits a site it has visited before, it will end in a loop. This can be simply stated as the {\em $0-1-\infty-$principle}, meaning that in DWRE the number of times a given site can be visited by the walk is zero, one, or infinity. However, we will see in Theorem \ref{weird_theorem} that DWERE is a much richer model, and that a site can be visited arbitrarily many times.

The key ingredient in the proof of Theorem \ref{large_deviations} is Lemma \ref{main_lemma_decreasing} that establishes a monotonicity property among {\em favorable environments}. A configuration of cookies on $\mathbb Z$ is called a favorable environment if it enables the walk starting at $0$ to reach $\lambda n$ in fewer than $n$ steps. Lemma \ref{main_lemma_decreasing} states that for every favorable environment one can change several cookies in $[0, O(\sqrt n)]$ to make another favorable environment that also allows the walk to avoid any backtrackings over $0$. This result was the key to establishing a sub-additivity necessary for proving large deviations. 

In the case when the maximal jump size is $L=2$ one can replace $O(\sqrt n)$ in Lemma \ref{main_lemma_decreasing} with a finite number. It remains unknown whether $O(\sqrt n)$ can be replaced by a finite number when $L\geq 3$.
%

Before delving into properties of the model, it is instructive to consider one concrete example. 

\begin{example}
Assume that the random environment is created in the following way. Each site of $\Z$ independently choses a sequence of two integers from $\{-3,-2, \dots, 3\}$. In the picture below the site $0$ has cookies $(-3, 2)$, while the site $2$ has cookies $(-2,1)$. We will denote the cookies at $0$ by $\omega(0,0)=-3$ and $\omega(1,0)=2$. Similarly, $\omega(0,2)=-2$ and $\omega(1,2)=1$.

\begin{center}
\includegraphics[scale=0.5]{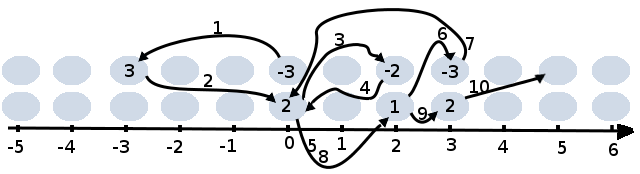} 
\end{center}
If the cookies are as shown in the picture above, then the first 10 steps of the walk are $X_0=0$, $X_1=-3$, $X_2=0$, $X_3=2$, $X_4=0$, $X_5=2$, $X_6=3$, $X_7=0$, $X_8=2$, $X_9=3$, and $X_{10}=5$.
\end{example}

\section{Properties of excited random environments}

The results in this section serve to outline some of the major differences between excited and non-excited environments. In regular deterministic walks in non-excited random environments, the number of visits to any particular site can be $0$, $1$, or infinity. The last case corresponds to the situation in which the walk ends in a loop passing through a prescribed number of sites infinitely many times.  In an excited environment, the walker may revisit $0$, for instance, any number of times $1, 2, \ldots, \infty$.  However, the probability of revisiting $0$ a large finite number of times decays exponentially, as the next theorem demonstrates. For convenience, we let \begin{eqnarray*}\mu_{\min} &=& \min\{\mu(k) : k\in [-L,L]\}, \text{ and}\\ \mu_{\max} &=& \max\{\mu(k) : k\in [-L,L]\}.\end{eqnarray*}

\begin{theorem} \label{weird_theorem}
Assume that $L\geq 2$ and $M\geq 2$.  Let $D_0$ be the cardinality of the set $\{n: X_n=0\}$. For each $k\in\mathbb N$ the following inequality holds
\begin{equation*}
(\mu_{\min})^{4Mk} \leq \prob{D_0 = k} \leq 2(1-(\mu_{\min})^{2M+L-2})^{k/2LM}
\end{equation*}
\end{theorem}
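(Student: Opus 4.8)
The plan is to prove the two bounds separately, each by constructing explicit cookie patterns near the origin that force the desired behavior and then estimating the probability of those patterns.

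For the lower bound $\prob{D_0 = k} \ge (\mu_{\min})^{4Mk}$, I would exhibit a deterministic environment, supported on a window of $O(k)$ sites around $0$, on which the walk visits $0$ exactly $k$ times and then escapes to (say) $+\infty$ without returning. The idea is to use the leftmost $M$ cookies at a handful of sites to ``pump'' the walk: arrange cookies at sites $0, 1$ (and perhaps $-1$) so that each pair of consecutive visits to $0$ consumes a fixed number of cookies and advances the walk's ``progress'' by one unit, until after $k$ visits the walk reaches a site from which the remaining cookies all point right (using $L \ge 2$ so that after exhausting cookies the walk can jump over $0$ rather than landing on it). Since the walk only ever consults at most $M$ cookies at each of $O(k)$ sites, and each of those cookies is prescribed to a fixed value, the probability of this environment is at least $(\mu_{\min})^{(\text{number of prescribed cookies})}$; a crude count gives at most $4Mk$ prescribed cookies, yielding the bound. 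The arithmetic of choosing the cookie values so that the count is genuinely $\le 4Mk$ and the escape is clean will be the fussy part here, but there is a lot of slack.

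For the upper bound, the strategy is a renewal/geometric-trials argument. I would show that there is a constant probability, bounded below by $(\mu_{\min})^{2M+L-2}$, of a ``killing event'' localized in a window of $O(LM)$ consecutive sites: namely, conditioned on the walk being at a site $z$ for the first time, with probability at least $(\mu_{\min})^{2M+L-2}$ the cookies on an appropriate block of $O(LM)$ sites ahead of (or behind) the walk are configured so that once the walk enters that block it can never leave it, hence never returns to $0$ (this uses Lemma~\ref{lemma_bound_annulus}-type reasoning, or is proved directly: a short stretch of sites whose exhausted cookies point ``inward'' traps the walk). Partition $\Z$ into disjoint blocks of length $O(LM)$; the events that successive fresh blocks are ``trapping'' are independent because they depend on disjoint cookies. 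If the walk visits $0$ at least $k$ times, then in particular it has traveled far enough to have had the opportunity to encounter at least $k/(2LM)$ disjoint fresh blocks without any of them trapping it (each excursion from $0$ that returns must exit and re-enter a bounded neighborhood, exposing new territory at a controlled rate), and each such block fails to trap with probability at most $1-(\mu_{\min})^{2M+L-2}$. The factor $2$ out front and the $1/(2LM)$ in the exponent absorb the bookkeeping about excursions to the left versus the right of $0$.

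The main obstacle I anticipate is the upper bound's claim that $D_0 \ge k$ forces exposure of $\Omega(k/LM)$ independent fresh blocks. One has to rule out the scenario where the walk oscillates in a small region, revisiting $0$ many times while consuming cookies but not exploring new sites --- and then argue that such oscillation is itself impossible for long because each site has only $M$ cookies, so after finitely many visits every nearby site behaves deterministically and the walk's motion on the already-explored region is eventually a fixed finite-state dynamical system that either loops (contradicting $D_0 = k < \infty$ unless it has already escaped) or must leave the region. Making this ``either it loops or it leaves'' dichotomy quantitative --- i.e. bounding how many times $0$ can be visited before a genuinely new site is reached --- is the crux, and I would handle it by a pigeonhole argument on the at most $(2L+1)^{M \cdot (\text{window size})}$ possible local configurations.
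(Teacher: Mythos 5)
Your overall architecture matches the paper's: an explicit deterministic environment for the lower bound, and a trapping-block argument for the upper bound. The lower-bound sketch is fine in spirit (the paper's Lemma~\ref{weird_lemma} constructs explicit functions $f,g:\Z\to\{-2,-1,1,2\}$ on $[-2k,2k-1]$ so that the induced deterministic dynamics visits $0$ exactly $k$ times and then settles into a loop avoiding $0$; prescribing all $M$ cookies at $4k$ sites gives $(\mu_{\min})^{4Mk}$). You propose ``escape to $+\infty$'' rather than ``loop away from $0$,'' but either mechanism works and the cookie count is the same order.

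The gap is in the upper bound, precisely at the step you correctly flag as the crux. You propose to quantify the ``oscillate or leave'' dichotomy by a pigeonhole on the $(2L+1)^{M\cdot(\text{window size})}$ local cookie configurations. That would only bound the number of returns to $0$ before fresh territory is exposed by a quantity exponential in the window size, and would therefore yield an exponent on the order of $k/(\text{something exponential in }L,M)$, not the stated $k/2LM$. The paper's argument is sharper and more elementary: if $D_0=k<\infty$, then between any two consecutive visits to $0$ the walker must consume at least one previously unseen cookie, because otherwise, during that excursion, every site visited already has its remaining cookies frozen, so the excursion is a deterministic cycle through $0$, forcing $D_0=\infty$. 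Hence $k$ returns to $0$ force at least $k$ distinct cookies consumed, hence at least $k/M$ distinct sites visited, hence at least $k/(LM)$ distinct $L$-blocks; since the maximal jump is $L$, these blocks form a consecutive run through $0$, so at least $k/(2LM)$ of them lie on one side of $0$, and none of those may be a trap. Without this ``one new cookie per return'' observation, your renewal argument cannot reach the claimed exponent.
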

\begin{proof}

The lower bound follows from Lemma~\ref{weird_lemma} below.

\begin{lemma}\label{weird_lemma} There exist two functions $f,g:\mathbb Z\to\{-2,-1,1,2\}$ such that the deterministic sequence $x_n$ defined by $x_0=0$ and 
$$x_{n+1}=x_n+\left\{\begin{array}{ll}f(x_n),& \mbox{if }x_n\in\{x_0,\dots, x_{n-1}\},\\
g(x_n),& \mbox{if }x_n\not\in\{x_0,\dots, x_{n-1}\}\end{array}\right.$$ contains exactly $k$ terms equal to $0$ and has $-2k\leq x_n \leq 2k-1$ for all $n$.
\end{lemma}

Indeed, if we find two such functions, then the event $E\subset\{D_0 = k\}$ can be constructed as follows:
$$
E = \left\{\omega \in \Omega \ : \ \omega(0,z)=g(z) \text{ and } \omega(i,z)=f(z) \text{ for } i\geq 1 \text{ and } -2k \leq z \leq 2k-1\right\}.
$$
We have $\prob{E} \geq (\mu_{\min})^{4Mk} > 0$, so $\prob{D_0 = k} \geq (\mu_{\min})^{4Mk} $.

For the upper bound, suppose that $V_0^j$ is the time of the $j$th visit to $0$.  If $V_0^k < \infty$ and $V_0^{k+1} = \infty$, then the walker cannot get stuck in a loop that includes $0$, but clearly must return to $0$ $k-1$ times.  Therefore, between consecutive visits to $0$, the walker must see at least one new cookie, otherwise it will be stuck in a loop containing $0$.  That is, for each $0\leq j\leq k-1$, there exists $x \in \left\{X_{V_0^j}, X_{V_0^j+1}, \ldots, X_{V_0^{j+1}}\right\}$ such that $L_{V_0^j}(x) \leq M-1$.  Therefore, by time $V_0^k$, the walker must have visited at least $k / M$ distinct vertices.  Furthermore, this implies that the walker must have visited at least $k/LM$ regions of the form $[iL, (i+1)L -1]$ for $i \in \Z$.  That is,
$$
\abs{\left\{ i \in \Z: [iL,(i+1)L - 1] \cap \{X_t : 0\leq t \leq V_0^k\}\neq \emptyset\right\}} \geq \frac{k}{ LM}.
$$

In order for the walker to revisit 0 at time $V_0^k$, none of the regions $[iL, (i+1)L-1]$ that the walker visits before this time can be a trap where the walker gets stuck in a loop.  An example of a trapping configuration on the interval $[iL, (i+1)L-1]$ has $\omega(j,iL) = 1 = -\omega(j,iL+1)$ for $j\geq 0$ and $\omega(0,iL+x) = -x$ for $x = 2, \ldots, L-1$.  Therefore, the probability that $[iL, (i+1)L-1]$ is a trapping region is at least $(\mu_{\min})^{2M+L-2}$.

Finally, observe that the set of $i\in \Z$ such that the walker visits $[iL, (i+1)L-1]$ by time $V_0^k$ must be a set of consecutive integers containing $0$, since the walker cannot jump over any such region.  Therefore, the walker must either visit every such region for $0\leq i \leq k/2LM - 1$, or every such region for $-k/2LM+1 \leq i \leq 0$.  The probability that none of these regions is a trap gives the upper bound.
\end{proof}

\begin{proof}[Proof of Lemma~\ref{weird_lemma}] 
Let us first define $f$ and $g$ on the set $\mathbb Z_-$ of negative numbers, i.e. $\mathbb Z_-=\{\dots, -3,-2,-1\}$. If $z\in\mathbb Z_-$ is odd we set $f(z)=g(z)=2$, and if $z\in\mathbb Z_-$ is even we set $f(z)=-2$ and $g(z)=1$.

For $i\in\{0,1,\dots, 2k-3\}$ we define $$g(i)=\left\{\begin{array}{ll}-2,&\mbox{ if } i \mbox{ is even,}\\
-1,&\mbox{ if } i \mbox{ is odd;}\end{array}\right.\quad\quad\mbox{and}\quad\quad f(i)=\left\{\begin{array}{rl}-2,&\mbox{ if } i \mbox{ is even,}\\
2,&\mbox{ if } i \mbox{ is odd.}\end{array}\right.$$
We finally define $f(2k-1)=g(2k-1)=-1$ and $f(2k-2)=g(2k-2)=1$. 
\begin{center}
\includegraphics[scale=0.5]{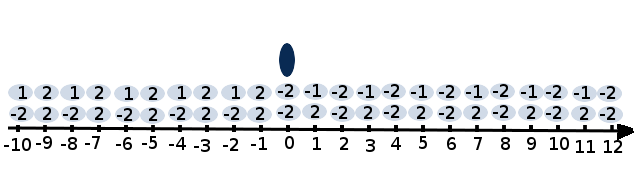} 
\end{center}

We will prove that $x_{2i(i+1)}=0$ for $i\in\{0,1,2,\dots,k-1\}$ and that all other terms of the sequence $(x_n)_{n=0}^{\infty}$ are non-zero. 

We will now use induction on $i$ to prove that for each $i\in\{0,1,\dots, k-1\}$ the following holds: \begin{eqnarray}\nonumber \label{induction_proof}&&x_{2i(i+1)}=0\;\mbox{ and }\\ &&\{x_0,\dots, x_{2i(i+1)}\}=\{-2i,-2i+1,\dots, 0,1,\dots, 2i-1\}.\end{eqnarray}

This is easy to verify for $i=0$ and $i=1$. Assume that the statement is true for some $i$ and let us prove it for $i+1$. 

Let us denote $R_i=\{x_0,\dots, x_{2i(i+1)}\}=\{-2i,\dots, 2i-1\}$.
Then we have that $x_{2i(i+1)}=0$, and since $0\in R_i$ we have that $x_{2i(i+1)+1}=0+f(0)=-2$. Since $-2\in R_i$ we get $x_{2i(i+1)+2}=-2-2=-4$, and so on. 
We obtain that $x_{2i(i+1)+i}=-2i\in R_i$ which implies that $x_{2i(i+1)+i+1}=-2i-2\not\in R_i$. Therefore $x_{2i(i+1)+i+2}=-2i-2+g(-2i-2)=-2i-2+1=-2i-1\not\in R_i$. 
Hence $x_{2i(i+1)+i+3}=-2i-1+g(-2i-1)=-2i+1\in R_i$. This implies that $x_{2i(i+1)+i+4}=-2i+3\in R_i$. Continuing this way we obtain that $x_{2i(i+1)+i+2i+2}=2i-1\in R_i$ and
$x_{2i(i+1)+i+2i+3}=2i+1\not\in R_i$. Therefore $x_{2i(i+1)+3i+4}=2i+1+g(2i+1)=2i\not\in R_i$ and $x_{2i(i+1)+3i+5}=2i+f(2i)=2i-2\in R_i$. 

We now have $x_{2i(i+1)+3i+6}=2i-4\in R_i$ and continuing this way we obtain $x_{2i(i+1) +3i+i+4}=0$. This implies that $x_{2i(i+1)+4(i+1)}=0$ which is the same as $x_{2(i+1)(i+2)}=0$.
In addition, $$\{x_0,\dots, x_{2(i+1)(i+2)}\}=R_i\cup\{-2(i+1),-2i-1,2i,2i+1\}=\{-2i-2,\dots, 2i,2i+1\}$$ thus the proof of (\ref{induction_proof}) is complete. 

Placing $i=k-1$ in the first equation in (\ref{induction_proof}) we obtain $x_{2k(k-1)}=0$, and similarly as in the previous proof we get that $x_{2i(i+1)+3i+4}=2i=2k-2$. However, since $g(2k-2)=1$ we get that $x_{2i(i+1)+3i+5}=2k-1$ and subsequently that $x_{2i(i+1)+3i+6}=2k-1+g(2k-1)=2k-2$. This implies that $x_{2i(i+1)+3i+7}=2k-2+f(2k-2)=2k-1$ and $x_{2i(i+1)+3i+8}=2k-1+f(2k-1)=2k-2$. From now on the sequence is periodic and none of the terms will be zero. 

This proves that there are exactly $k$ terms equal to 0, and since it is stuck in a loop, no vertices outside $[-2k, 2k-1]$ are visited.
\end{proof}

\section{Laws of large numbers} 

In this section we assume that the walk is in $\mathbb R^d$ for any $d\in\mathbb N$. 
We prove that the walk is almost surely bounded. As a consequence, the law of large numbers holds with the limiting velocity equal to $0$. Moreover, all of the moments of the process $X_n$ have growth that is slower than any function $f(n)$ that satisfies $\lim_{n\to\infty}f(n)=+\infty$. This means that the central limit theorem also does not have the classical form for this model. 

The following lemma will be essential for the proofs of the boundedness of the walk. This lemma establishes the exponential decay of the probabilities that the walk reaches the {\em annulus} $A_k$ defined in the following way:
$$A_k=\left[-(k+1)L,(k+1)L\right]^d\setminus \left[-kL,kL\right]^d.$$

This way, $A_0$ is the cube $[-L,L]^d$, while for $k\geq 1$, $A_k$ is an annulus. 

\begin{lemma}\label{lemma_bound_annulus} There exists a positive real number $c\in(0,1)$ and an integer $k_0$ such that $$\prob{T_{A_k}<+\infty}\leq c^k$$ holds for all $k\geq k_0$.
\end{lemma}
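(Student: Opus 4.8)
The plan is to run a downward induction on $k$, based on a one–step estimate: the probability of reaching $A_k$, given that the walk has already reached $A_{k-1}$, is at most $1-p$ for a fixed $p>0$ that does not depend on $k$ (once $k\ge k_0$). The first ingredient is purely geometric. Since every coordinate of the increment $X_{t+1}-X_t=\omega(L_t(X_t),X_t)$ has modulus at most $L$, the quantity $\|X_t\|_\infty$ changes by at most $L$ per step and starts at $0$; writing $T_j:=T_{A_j}$ for the first time that $\|X_t\|_\infty>jL$, the walk cannot jump over an entire annulus, so on $\{T_{A_k}<\infty\}$ one has $T_0<T_1<\cdots<T_k<\infty$. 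In particular, to reach $A_k$ the walk must first reach $A_{k-1}$, and the site $Y:=X_{T_{k-1}}$ it occupies at that moment lies in $A_{k-1}$ and has not been visited before; hence every one of the $M$ cookies at every site of $A_{k-1}$ is still unconsumed at time $T_{k-1}$.

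The second ingredient is a local trap of fixed size, in the spirit of the trapping regions used for the upper bound in the proof of Theorem~\ref{weird_theorem}. Given the value $y$ of $Y$, I would pick a lattice neighbour $y'$ of $y$ that also lies in $A_{k-1}$; such a $y'$ exists for all $d\ge 2$, and for $d=1$ whenever $L\ge 2$ (the residual case $d=1,L=1$ is nearest–neighbour and can be treated by the elementary observation that an unbounded such walk would eventually follow a monotone ray, an event of probability $0$). Let $\mathcal T_y$ be the event that $\omega(i,y)=y'-y$ and $\omega(i,y')=y-y'$ for every $0\le i\le M-1$. On $\mathcal T_y$, once the walk is at $y$ it oscillates between $y$ and $y'$ forever, and since $\{y,y'\}\subset A_{k-1}\subset[-kL,kL]^d$ while $A_k\cap[-kL,kL]^d=\emptyset$, it never enters $A_k$. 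Because $\mathcal T_y$ constrains only the $2M$ cookies at $y$ and $y'$, we get $\prob{\mathcal T_y}\ge (\mu_{\min})^{2M}=:p>0$, a bound uniform in $y$ and in $k$ as soon as $k$ is large enough that the construction is non-degenerate; this is what fixes $k_0$.

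Combining the two: on $\{T_{k-1}<\infty\}$, condition on $\mathcal F_{T_{k-1}}:=\sigma(X_0,\dots,X_{T_{k-1}})$. The walk up to time $T_{k-1}$ has read only cookies at sites outside $A_{k-1}$, so the cookie field on $A_{k-1}$ is independent of $\mathcal F_{T_{k-1}}$, while $Y$ is $\mathcal F_{T_{k-1}}$–measurable; therefore
\[
\prob{T_{A_k}<\infty\,\big|\,\mathcal F_{T_{k-1}}}\le 1-\prob{\mathcal T_{Y}\,\big|\,\mathcal F_{T_{k-1}}}=1-\prob{\mathcal T_y}\big|_{y=Y}\le 1-p .
\]
Taking expectations gives $\prob{T_{A_k}<\infty}\le (1-p)\,\prob{T_{A_{k-1}}<\infty}$, and iterating down to level $k_0$ yields $\prob{T_{A_k}<\infty}\le (1-p)^{\,k-k_0}$ for $k\ge k_0$; picking any $c\in(1-p,1)$ and enlarging $k_0$ so that $(1-p)^{-k_0}\le (c/(1-p))^{k}$ for $k\ge k_0$ converts this into the stated $\prob{T_{A_k}<\infty}\le c^k$.

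The step I expect to be the main obstacle is making the conditioning rigorous: one needs a strong–Markov–type splitting for this non-Markov model, namely that the cookie variables $\{\omega(i,z):z\in A_{k-1},\,0\le i\le M-1\}$ are independent of $\mathcal F_{T_{k-1}}$ and that $Y$ is measurable with respect to it. Both reduce to careful bookkeeping of which cookies have been consumed by time $T_{k-1}$, where the no–skipping property from the first paragraph does the real work (it guarantees $X_t\notin A_{k-1}$ for all $t<T_{k-1}$, so no cookie in $A_{k-1}$ has been read). A secondary point is the uniformity of $p$ over the entry site $Y$ and over $k$, which is exactly what is absorbed into the threshold $k_0$; the argument above is written for a concrete two–site trap, and for large $d$ one may prefer a slightly larger local trap, but the structure is unchanged.
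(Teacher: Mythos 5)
Your proposal follows the same overall strategy as the paper's proof: hit the annuli one at a time, observe that no cookie in $A_{k-1}$ has been consumed at the first entry time $T_{A_{k-1}}$, and exhibit a local trap at the entry site whose probability is bounded below uniformly in the entry point and in $k$, yielding a geometric recursion $\prob{T_{A_k}<\infty}\le(1-p)\prob{T_{A_{k-1}}<\infty}$. The independence step you worry about is handled in the paper exactly as you suggest, by decomposing over the entry site: the events $\Omega_x=\{T_{A_{k}}<\infty,\,X_{T_{A_{k}}}=x\}$ lie in the $\sigma$-field generated by the cookies in $A_0\cup\cdots\cup A_{k-1}$, hence are independent of the cookies at $x$.

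The only substantive difference is the choice of trap. You use a two-site oscillating trap $\mathcal T_y$ involving $y$ and a neighbour $y'$ in the same annulus, in the spirit of the trapping regions from the proof of Theorem~\ref{weird_theorem}. The paper instead uses the one-site event $G(x)=\{\omega(i,x)=0\text{ for all }0\le i\le M-1\}$: if every cookie at the entry site is a zero-cookie, the walker is frozen there forever. That trap is both simpler and unconditional --- it requires no geometric structure on the annulus, so it works for every $d\ge1$ and every $L\ge1$ --- whereas yours needs a neighbour of $Y$ inside $A_{k-1}$ and so breaks down when $d=1$, $L=1$. You flag that case, but your proposed fix (an unbounded nearest-neighbour walk would have to follow a monotone ray, an event of probability zero) only delivers almost-sure boundedness, i.e.\ the conclusion of Lemma~\ref{loop_probability_1}, not the quantitative bound $\prob{T_{A_k}<\infty}\le c^k$ that Lemma~\ref{lemma_bound_annulus} actually asserts, and which is what feeds the argument that $\phi(\lambda)<0$ in Theorem~\ref{concavity of the rate}. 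The cleanest repair is simply to replace $\mathcal T_Y$ by $G(Y)$ throughout, which also improves your constant from $\mu_{\min}^{2M}$ to $\mu_{\min}^{M}$ and lets you take $k_0=0$ with no need to enlarge it at the end. (Your trap would become the natural choice under the weakened assumption $\mu(0)=0$ mentioned in the paper's remarks, but under the standing hypothesis $\mu(k)>0$ for all $k\in[-L,L]$ the zero-cookie trap is available and preferable.)
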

\begin{proof}
For $x\in\mathbb Z^d$ let us denote by $G(x)$ the event that all cookies at $x$ are zero-cookies. In other words, 
$G(x)=\left\{\omega(x,i)=0 \mbox{ for all } 0\leq i\leq M-1\right\}$. On the event $G(x)$ the walk would get stuck at the location $x$ if it ever reaches it. 

We obviously have the following relation $$\prob{T_{A_{k+1}}<+\infty}\leq \prob{T_{A_{k}}<+\infty, G\left(X_{T_{A_{k}}}\right)^C }.$$
The required inequality would be established if we manage to prove that for every $k\geq 0$ the following inequality holds:
\begin{eqnarray}\label{recursive_equality}
 \prob{T_{A_{k}}<+\infty, G\left(X_{T_{A_{k}}}\right)^C }&\leq&\left(1-\mu_{\min}^M\right)\cdot \prob{T_{A_{k}}<+\infty  }.
\end{eqnarray}
For each $x\in A_k$ let us introduce the following event $$\Omega_x=\left\{T_{A_k}<+\infty\mbox{ and } X_{T_{A_k}}\left(\omega\right)=x\right\}.$$ The event $\Omega_x$ is in the sigma field generated by the cookies inside the set $A_0\cup\cdots \cup A_{k-1}$. Therefore, $\Omega_x$ and $G\left(x\right)$ are independent.

We now have
\begin{eqnarray*}
 \prob{T_{A_{k}}<+\infty, G\left(X_{T_{A_{k}}}\right)^C }&=&\sum_{x\in A_{k}}\prob{\Omega_x\cap G\left(x\right)^C}\\
 &=&\sum_{x\in A_k}\prob{\Omega_x}\cdot \prob{G\left(x\right)^C}\\
 &\leq&\left(1-\mu_{\min}^M\right) \cdot\sum_{x\in A_k}\prob{\Omega_x}\\
 &=&\left(1-\mu_{\min}^M\right)\cdot \prob{T_{A_{k}}<+\infty}.
\end{eqnarray*}
This completes the proof of (\ref{recursive_equality}), and hence the proof of the required inequality.
\end{proof}

A consequence of Lemma \ref{lemma_bound_annulus} is that the sequence $X_n$ is almost surely bounded. We present this result in the following lemma.

\begin{lemma}\label{loop_probability_1} Denote by $B$ the event that $X_n$ is a bounded sequence. More precisely, $B=\{\exists M$ such that $\|X_n\|_{\infty}\leq M$ for all $n\geq 0\}$, where $\|x\|_{\infty}$ denotes the biggest coordinate of the $d$-dimensional vector $x$. 
Then $\prob B=1$.
\end{lemma}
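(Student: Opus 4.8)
The plan is to deduce almost-sure boundedness of $(X_n)$ directly from the exponential annulus estimate in Lemma \ref{lemma_bound_annulus} via Borel--Cantelli. First I would observe that the walk is bounded if and only if it visits only finitely many of the annuli $A_k$; more precisely, $B^C = \{T_{A_k} < +\infty \text{ for all } k\}$, since the annuli $\{A_k\}_{k\ge 0}$ partition $\Z^d$ (indeed, they partition $\R^d$, but $X_n \in \Z^d$), and the walk reaches every annulus infinitely indexed beyond some point precisely when $\|X_n\|_\infty$ is unbounded. Hence $B^C = \bigcap_{k\ge 0}\{T_{A_k}<+\infty\} = \limsup_k \{T_{A_k}<+\infty\}$, using that $\{T_{A_{k+1}}<+\infty\} \subseteq \{T_{A_k}<+\infty\}$ because the walk cannot reach $A_{k+1}$ without first passing through $A_k$ (jumps have size at most $L$, and $A_k$ is an annulus of width $L$ in each coordinate, so it cannot be stepped over).

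Next I would apply Lemma \ref{lemma_bound_annulus}: there are $c\in(0,1)$ and $k_0$ with $\prob{T_{A_k}<+\infty}\le c^k$ for all $k\ge k_0$. Since $\sum_{k\ge k_0} c^k < \infty$, the Borel--Cantelli lemma gives $\prob{\limsup_k \{T_{A_k}<+\infty\}} = 0$, so $\prob{B^C}=0$ and $\prob B = 1$. Alternatively, and even more simply, by monotonicity $\prob{B^C} = \lim_{k\to\infty}\prob{T_{A_k}<+\infty} \le \lim_{k\to\infty} c^k = 0$; the Borel--Cantelli step is not strictly needed once the nesting of the events is in hand.

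I expect the only genuine point requiring care — and hence the main obstacle, though it is a mild one — is justifying the set-theoretic identity $B^C = \bigcap_k\{T_{A_k}<+\infty\}$ and the nesting of these events, i.e.\ arguing cleanly that the walk cannot skip an annulus. This rests on the geometry of the $A_k$: each $A_k$ separates $[-kL,kL]^d$ from the complement of $[-(k+1)L,(k+1)L]^d$, and since each coordinate of a single step changes by at most $L$, a path from inside the inner cube to outside the outer cube must land in $A_k$. Once this is spelled out, the remainder is immediate from Lemma \ref{lemma_bound_annulus}. I would also note in passing that boundedness implies the walk is eventually periodic (it visits finitely many sites, hence returns to some site with the same cookie-stack state), which sets up the consequences about vanishing limiting velocity and sub-$f(n)$ moment growth mentioned in the surrounding text.
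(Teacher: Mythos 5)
Your argument is correct and matches the paper's proof: both rest on the inclusion $B^C\subseteq\bigcap_k\{T_{A_k}<+\infty\}$ and the bound $\prob{T_{A_k}<+\infty}\le c^k$ from Lemma~\ref{lemma_bound_annulus} to force $\prob{B^C}=0$. The extra care you take with the nesting $\{T_{A_{k+1}}<+\infty\}\subseteq\{T_{A_k}<+\infty\}$ and the Borel--Cantelli phrasing are fine but not needed; the paper simply uses the monotone bound directly, which is the same as your ``even more simply'' remark.
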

\begin{proof}  
On the event $B^C$ we must have   $\left\{T_{A_k}<+\infty\right\}$ for all $k\in\mathbb N$. However, Lemma \ref{lemma_bound_annulus} implies that $\prob {T_{A_k}<+\infty}<c^k$ for each $k\geq k_0$, hence
$$\prob{B}=\prob{\bigcap_{k\geq 1} \left\{T_{A_k}<+\infty\right\}}\leq c^k,$$ for every $k\geq k_0$ which is only possible if $\prob{B}=0$.
\end{proof}

\begin{corollary}\label{theorem_bound_moments} For every function $f:\mathbb N\to\mathbb R$ such that $\lim_{n\to\infty}f(n)=+\infty$ the following limit holds almost surely: \begin{eqnarray}
\nonumber\lim_{n\to\infty}\frac{  \left\|X_n\right\|_{\infty} }{f(n)}=0.\end{eqnarray}
\end{corollary}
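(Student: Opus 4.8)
The plan is to deduce the corollary directly from Lemma~\ref{loop_probability_1}, with essentially no additional work. First I would fix an outcome $\omega$ in the event $B$ that $(X_n)$ is bounded; by Lemma~\ref{loop_probability_1} this event has probability one. By the definition of $B$, for such an $\omega$ there is a finite (random) constant $M=M(\omega)$ with $\|X_n\|_{\infty}\leq M$ for all $n\geq 0$.

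Next, since $\lim_{n\to\infty}f(n)=+\infty$, there is an $n_1$ (depending only on $f$) such that $f(n)>0$ for all $n\geq n_1$, so the ratio $\|X_n\|_{\infty}/f(n)$ is well defined and nonnegative for $n\geq n_1$. For every such $n$ one has $0\leq \|X_n\|_{\infty}/f(n)\leq M/f(n)$, and the right-hand side tends to $0$ as $n\to\infty$ because $M$ is fixed and $f(n)\to+\infty$. By the squeeze theorem, $\lim_{n\to\infty}\|X_n\|_{\infty}/f(n)=0$ for this $\omega$. Since this holds for every $\omega\in B$ and $\prob{B}=1$, the stated limit holds almost surely.

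There is no real obstacle here: all the substantive probabilistic content is already contained in Lemma~\ref{lemma_bound_annulus} and Lemma~\ref{loop_probability_1}, and the corollary is a deterministic pointwise consequence of boundedness of the trajectory. The only point worth a sentence is that $f$ is not assumed positive, which is handled by noting that $f$ is eventually positive, and the limit depends only on the tail of the sequence.
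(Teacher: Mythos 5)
Your proof is correct and matches the intended argument: the paper states this as a corollary of Lemma~\ref{loop_probability_1} without writing out the details, and the pointwise squeeze argument on the almost-sure event $B$ is precisely what is meant. The remark about $f$ being only eventually positive is a fair bit of care, and nothing is missing.
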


\section{Large deviations }
In this section we prove Theorem \ref{large_deviations}. For $\lambda\in[0,L]$ we want to show the existence of the limit $\ds \lim_{n\to\infty} \frac1n \log \prob{X_n\geq \lambda n}$.  
As stated earlier, we will prove this under the assumption that there are at least $3$ cookies on each site, i.e. $M\geq 3$.
Before we can prove the theorem we need to introduce the following notation. For $k\in\mathbb N$ and $x\in\mathbb Z$ let us denote by $V_{x}^k$ the time of the $k$th visit to the site $x$.  The hitting time $V_x^k$ can be inductively defined as: 
\begin{eqnarray*}
V_x^1(\omega)&=&\inf\{m: X_m(\omega)=x\},\\
V_x^{i+1}(\omega)&=&\inf\{m> V_x^i(\omega): X_m(\omega)=x\} \;\mbox{for }i\geq 1.
\end{eqnarray*}
Instead of $V_x^1$ we will often write $V_x$. For any $A\subseteq \mathbb R$ let us define $$T_A=\inf\{n: X_n\in A\}.$$ 
If $x>0$ we will write $T_x$ instead of $T_{[x,+\infty)}$. The following two  inequalities are easy to establish: 
\begin{align}
\label{inequality 1}
\prob{X_n \geq \lambda n}& \leq \prob{T_{\lambda n} \leq n} \;\;\mbox{and}\\
\label{inequality 2}
\prob{X_n\geq \lambda n}& \geq \prob{T_{\lambda n}\le n, \omega(j,x)=0 \mbox{ for all }j \mbox{ and } x\in [{\lambda n}, {\lambda n} + L]} \\
\nonumber &\geq C \prob{T_{\lambda n} \leq n},
\end{align}
for some constant $C$ independent of $n$. 
Therefore, it is sufficient to prove that $\ds \lim_{n\to\infty} \frac1n \log \prob{T_{\lambda n} \leq n}$ exists.

Let $$A_n := \left\{T_{\lambda n} \leq n, \inf_{k\leq T_{\lambda n}}X_k\geq 0 \right\}$$ denote the event that the walk reaches $\lambda n$ by time $n$ before backtracking to the left of $0$.  It is trivially true that $A_n \subset \{T_{\lambda n}\leq n\}$ so $\prob{A_n} \leq \prob{T_{\lambda n} \leq n}$.

\subsection{Definitions} If $a = (a_\ell)_{\ell = 1}^K\in \mathbb{Z}$ where $K \in \mathbb{N}\cup\{\infty\}$ and $B\subset\mathbb{Z}$, then the restriction of $a$ to $B$ is denoted $a\big|_{B}$, and is the sequence of terms in $a$ that belong to $B$ with their order intact.  For $t_1\leq t_2$, let $$X_{[t_1,t_2]}(\omega) = (X_{t_1}(\omega), X_{t_1+1}(\omega), \ldots, X_{t_2}(\omega))$$ denote the sequence of locations of the walker from steps $t_1$ through $t_2$.

\begin{definition}\label{subenvironment}
For $\omega, \omega' \in \{T_\ell < \infty\}$ and $0\leq m < \ell$, let $\omega' \se{\ell,m} \omega$ denote the following relationship between environments $\omega$ and $\omega'$.
\begin{enumerate}
\item $\omega'(j,x) = \omega(j,x)$ for all $x>m$ and all $j\ge0$; 
\item $X_{[0,T_{\ell}(\omega')]}(\omega') \big\vert_{[m,\ell]} = X_{[0,T_{\ell}(\omega)]}(\omega) \big\vert_{[m,\ell]}$;
\item The sequence $X_{[0,T_{\ell}(\omega')]}(\omega')$ is a subsequence of $X_{[0,T_{\ell}(\omega)]}(\omega)$.
\end{enumerate}
\end{definition}

In other words, we will write $\omega' \se{\ell,m} \omega$ if (1) the two environments are identical to the right of $m$, (2) the walkers on both environments visit the same sites in the same order to the right of $m$ and until exceeding $\ell$, but (3) the walker on $\omega'$ may avoid some parts of the path followed by the walker on $\omega$ to the left of $m$. Observe that $\se{\ell,m}$ gives a partial ordering of the environments in $\{T_\ell < \infty\}$. 

\subsection{Monotonicity results} The next theorem provides the asymptotic equivalence of probabilities $\prob{T_{\lambda n}\leq n}$ and $\prob{A_n}$ on the logarithmic scale. 
\begin{theorem}\label{main_bound}
There exists $C\in\mathbb R_+$, depending on $L, M$ and $\mu$, such that the following inequality holds for all~$n$:
\begin{eqnarray}\label{main inequality} C^{\sqrt n} \prob{A_n}\geq \prob{T_{\lambda n}\leq n}.\end{eqnarray}
\end{theorem}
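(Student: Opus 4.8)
The plan is to show that any environment $\omega$ on which the walker reaches $\lambda n$ by time $n$ can be modified, by changing only cookies in a window $[0, O(\sqrt n)]$, into an environment $\omega'$ on which the walker still reaches $\lambda n$ by time $n$ but never backtracks past $0$; the cost of forcing these cookie values is at most $\mu_{\min}^{-O(\sqrt n)}$, which gives the factor $C^{\sqrt n}$. The logical skeleton is: $\prob{T_{\lambda n}\le n} = \sum_\omega \prob{\{\omega\}}$ restricted to the relevant event, and we want to exhibit a map $\Phi$ from $\{T_{\lambda n}\le n\}$ into $A_n$ such that (i) $\Phi(\omega)$ differs from $\omega$ only on $[0,O(\sqrt n)]$, so $\prob{\{\omega\}}\le \mu_{\min}^{-cM\sqrt n}\prob{\{\Phi(\omega)\}}$, and (ii) the fibers $\Phi^{-1}(\omega')$ have bounded cardinality, or more precisely the total mass is controlled. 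Then $\prob{T_{\lambda n}\le n}\le \mu_{\min}^{-cM\sqrt n}\cdot(\text{fiber bound})\cdot\prob{A_n}$, and setting $C$ to absorb both factors finishes it. The reference to Lemma~\ref{main_lemma_decreasing} in the introduction indicates that the heart of the matter is precisely this cookie-surgery lemma, stated there as: every favorable environment can be altered on $[0,O(\sqrt n)]$ to a favorable environment that avoids backtracking over $0$.

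The first concrete step is to set up the surgery. Suppose the walker on $\omega$ does backtrack left of $0$; consider the last time $\tau \le T_{\lambda n}$ that it is at a negative site, and let $x_\star = X_{\tau+1}\ge 0$ be the site it lands on when it crosses $0$ for the last time before reaching $\lambda n$. The idea is to excise the entire portion of the trajectory spent at sites $<0$ (together with whatever excursions into $[0,\cdot]$ are interleaved with those negative excursions, in the sense of Definition~\ref{subenvironment}) and reroute the walker so that it never leaves $[0,\infty)$. Because the walk is deterministic and the cookies to the right of the surgery window are untouched, once we can guarantee the walker arrives at the correct site $X_\tau$-analogue with the correct local time, the remainder of the path is forced and agrees with $\omega$ to the right of the window — this is exactly the partial order $\se{\ell,m}$ being engineered. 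The window has width $O(\sqrt n)$ because of a counting argument: the walker makes at most $n$ steps, each of size $\le L$, and to the left of $0$ it must keep discovering new cookies or get trapped; the analysis (as in Theorem~\ref{weird_theorem}) shows the depth of a backtracking excursion of duration $t$ is $O(\sqrt t)$, so total excursions summing to $\le n$ steps reach depth $O(\sqrt n)$, and symmetrically the region near $0$ that must be re-cookied to splice the path has comparable size.

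The second step is the bookkeeping: we must check that the number of cookies changed is $O(\sqrt n)$, that each changed cookie has its new value forced to a specific element of $[-L,L]$ (so the probability ratio is at most $\mu_{\min}^{-1}$ per cookie, hence $C^{\sqrt n}$ overall for an appropriate $C=\mu_{\min}^{-cM}$), and that the map $\Phi$ is at most $C^{\sqrt n}$-to-one — since many original environments may map to the same $\omega'$, differing in how the excised left excursions were configured, but each such excision is itself supported on $[-O(\sqrt n),0]$ and there are at most $(2L+1)^{O(M\sqrt n)}$ possibilities, which is again of the form $C^{\sqrt n}$. Combining, $\prob{T_{\lambda n}\le n}\le (\text{cost per cookie})^{O(\sqrt n)}\cdot(\text{fiber size})\cdot\prob{A_n}\le C^{\sqrt n}\prob{A_n}$ after renaming $C$.

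The main obstacle I anticipate is not the probabilistic accounting but the combinatorial construction of $\Phi$ and the verification that the spliced trajectory is genuinely consistent: after we delete the negative excursions and modify cookies near $0$, we must make sure the new walker, using up cookies in the modified window in the new order, actually exits the window at the right site and with the right residual cookie-stack to the right of the window, so that clause (1) of Definition~\ref{subenvironment} propagates into clauses (2) and (3). This requires carefully tracking local times $L_t(x)$ through the surgery — deleting an excursion changes $L_t$ at every site the excursion touched — and is where the hypothesis $M\ge 3$ is used (with $M\ge 3$ there is enough "room" in the stack to absorb the bounded number of extra or missing visits near the splice point, which fails for $M=2$, $L\ge 3$ as noted in the remark). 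I would structure this as a sequence of lemmas isolating (a) the depth bound $O(\sqrt n)$, (b) the existence of a valid cookie reconfiguration on the window realizing $\omega'\se{\lambda n,0}\omega$ with $\inf_k X_k(\omega')\ge 0$, and (c) the fiber-size bound, then assemble them as above.
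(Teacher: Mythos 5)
Your high-level skeleton is the same as the paper's: define a map $\Phi$ from $\{T_{\lambda n}\le n\}$ into $A_n$ that changes cookies only in $[0,O(\sqrt n)]$, then bound the probability distortion per cookie and the fiber size, each by a quantity of the form $C^{\sqrt n}$. That accounting is correct (the paper even makes precisely your two bounds, using $C_1 = (\mu_{\max}/\mu_{\min})^M$ per site and $C_2 = (2L+1)^M$ per site, after first restricting to a finite environment space $\hat\Omega$ so that pointwise probability comparisons make sense -- a technicality you gloss over but which is easily supplied). You also correctly identify Lemma~\ref{main_lemma_decreasing} as the crux.

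The gap is in your proposed mechanism for that crux. You claim the $O(\sqrt n)$ window emerges because ``the depth of a backtracking excursion of duration $t$ is $O(\sqrt t)$.'' That is false: an excursion can reach depth $\Theta(t)$ in $t$ steps (e.g.~a block of cookies sending the walker left by $2$ on first visit and right by $2$ on second gives a boomerang of linear depth), and nothing in Theorem~\ref{weird_theorem} asserts otherwise -- Lemma~\ref{weird_lemma} merely \emph{exhibits} one walk with $k$ returns to $0$ confined to $[-2k,2k-1]$, it does not bound depth by square root of duration in general. The paper's argument for Lemma~\ref{main_lemma_decreasing} is of an entirely different shape: one takes a $\se{\lambda n, 2L\sqrt n}$-minimal element $\sigma$ (minimizing $T_{\lambda n}$ over the partial-order class) and shows that \emph{minimality} forces, whenever $\sigma$ backtracks past $0$, the existence of an increasing chain of sites $0<a_1<a_2<\cdots<a_I$ with $a_I\ge L\sqrt n$ (hence $I\ge\sqrt n$ since the gaps are $\le L$), each visited at least $M$ times with excursions of length $\ge\sqrt n$ between consecutive visits, and these nest (the $M\ge 3$ hypothesis is used precisely to extract the nesting $V^2_{a_i}\ge V^3_{a_{i+1}}$). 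Summing gives $V_x(\sigma)\ge I\sqrt n\ge n$, contradicting $T_{\lambda n}(\sigma)\le n$. So the minimal $\sigma$ already has no negative visits -- no explicit ``excision and splice'' near $0$ is performed, and in particular the modified cookies live only in $[0,2L\sqrt n]$, never at negative sites; the fiber count is therefore over configurations on that right-hand window, not over ways of realizing a deleted left excursion. Your proposed route via depth bounds would fail, and the step you defer to ``lemma (a)'' is where the real work lies.
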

\begin{proof}
We will use the following result whose proof will be presented later.
\begin{lemma}\label{main_lemma_decreasing} Assume that $n>\left(\frac{2L}{\lambda}\right)^2$. For each $\omega\in \{T_{\lambda n}\leq n \}$  there exists $\omega'\in\{T_{\lambda n }\leq n\}$ such that $$\omega'\se{\lambda n, 2L\sqrt n}\omega \quad \mbox{and}\quad X_{[0,T_{\lambda n}(\omega')]}(\omega')\cap (-\infty, -1]=\emptyset.$$
\end{lemma}

 For given $\omega \in \{T_{\lambda n}\leq n\}$   we can apply Lemma \ref{main_lemma_decreasing}  to obtain a new environment $\hat \omega\in \{T_{\lambda n}\leq n\}$ such that 
$$\inf_{0\leq k\leq T_{\lambda n}(\hat\omega)}X_k(\hat \omega)= 0.$$

Let us denote by $\tilde \omega$ the environment defined by: 
\begin{enumerate}
\item[(i)] For $x\not \in [0,2L\sqrt n]$ and  $j\in\{0,\dots, M-1\}$: $\tilde \omega(j,x)=\omega(j,x)$.
\item[(ii)] For $x\in [0,2L\sqrt n]$ and $j\in\{0,\dots, M-1\}$: $\tilde \omega(j,x)=\hat\omega(j,x)$.
\end{enumerate}
Since $\hat \omega$ and $\tilde \omega$ coincide on sites in $[0,+\infty)$ and $X(\hat\omega)$ does not visit negative sites, we conclude that $X(\tilde\omega)$ does  not visit negative sites. Therefore, for each 
$\omega\in\{ T_{\lambda n}\leq n\}$ there exists $\tilde \omega\in A_n$ such that $\omega$ and $\tilde \omega$ coincide on all sites except possibly for the sites in $[0,2L\sqrt n]$.

We can now  define a function $f:\{T_{\lambda n}\leq n\}\to A_n$ in the following way. 

Let us fix $n$. We can now define $\hat{\mathbb P}$ on the restriction $\hat\Omega$ of $\Omega$ that corresponds to the portion of the integer axis between the numbers $-Ln$ and $Ln$.  
The purpose of this restriction is so that $\hat{\mathbb P}(\omega)>0$ for each $\omega \in \hat\Omega$. Formally, $$\hat \Omega=[-L,L]^{[0,M-1]\times [-Ln, Ln]},$$ and $\hat{\mathbb P}$ is defined to be the restriction of $\mathbb P$. Then we have $\hat{\mathbb P}\left(T_{\lambda n}\leq n\right)=\prob{T_{\lambda n}\leq n}$ and $\hat{\mathbb P}(A_n)=\prob{A_n}$, where each $\omega \in \Omega$ is identified with an element of $\hat\Omega$ by truncation, which will also be denoted $\omega$. 
It suffices to prove that there is $C\in\mathbb R_+$ (independent of $n$) such that \begin{eqnarray}\label{inequality with hats}\hat{\mathbb P}\left(T_{\lambda n}\leq n\right)\leq C^{\sqrt n}\hat{\mathbb P}(A_n) .\end{eqnarray} Let $C_1=\left(\frac{\mu_{\max}}{\mu_{\min}}\right)^M$ and   $C_2=(2L+1)^M$.
We will prove inequality (\ref{inequality with hats}) for $C=\left(C_1C_2\right)^{2L}$. 
\begin{eqnarray*}\hat{\mathbb P} \left(T_{\lambda n}\leq n\right)&=& \sum_{\omega \in\left\{T_{\lambda n}\leq n\right\} }\hat{\mathbb P}(\omega)\leq  \sum_{\omega \in\left\{T_{\lambda n}\leq n\right\} }C_1^{2L\sqrt n}\hat{\mathbb P}(f(\omega))\\
&=&C_1^{2L\sqrt n}\sum_{\omega \in\left\{T_{\lambda n}\leq n\right\} }\sum_{\omega'\in A_n}\hat{\mathbb P}(\omega')\cdot 1_{f(\omega)=\omega'}\\&=&
C_1^{2L\sqrt n}\sum_{\omega'\in A_n}\sum_{\omega \in\left\{T_{\lambda n}\leq n\right\} }\hat{\mathbb P}(\omega')\cdot 1_{f(\omega)=\omega'}\\&=&
C_1^{2L\sqrt n}\sum_{\omega'\in A_n}\hat{\mathbb P}(\omega')\cdot\sum_{\omega \in\left\{T_{\lambda n}\leq n\right\} } 1_{f(\omega)=\omega'}\\&=&
C_1^{2L\sqrt n}\sum_{\omega'\in A_n}\hat{\mathbb P}(\omega')\cdot\left|\left\{f^{-1}(\omega')\right\}\right|.
\end{eqnarray*} 
If $f(\omega)=\omega'$ then the environments $\omega$ and $\omega'$ coincide outside of $[0, 2L\sqrt n]$. Since there could be at most $C_2^{2L\sqrt n}$ different environments that coincide with $\omega$ outside of $[0, 2L\sqrt n]$, we obtain $$\hat{ \mathbb P}\left(T_{\lambda n}\leq n\right)\leq 
C_1^{2L\sqrt n}\cdot C_2^{2L\sqrt n} \sum_{\omega'\in A_n} \hat{\mathbb P}(\omega')=C^{\sqrt n}\hat{\mathbb P}(A_n).$$
This completes the proof of inequality (\ref{inequality with hats}) which is equivalent to (\ref{main inequality}).
\end{proof}

\begin{lemma}\label{raising stacks}
Fix $\omega \in \Omega$.  Suppose $a,b \in \mathbb{Z}$ with $\abs{a-b}\leq L$, and $0\leq t_a<t_b$ are such that $X_{t_a}(\omega) = a$, $X_{t_b}(\omega) = b$, and one of the following two conditions is satisfied:
\begin{enumerate}
\item[(a)] $L_{t_a}(\omega,a) < M-1$;
\item[(b)] $X_{t_a+1}(\omega)=b$. 
\end{enumerate} Then there exists $\omega' \in \Omega$ such that
\begin{itemize}
\item[(i)] $X_{[0,t_a]}(\omega') = X_{[0,t_a]}(\omega)$;
\item[(ii)] $X_{[t_a+1, \infty)}(\omega') = X_{[t_b,\infty)}(\omega)$;
\item[(iii)] $\omega'(j,x) = \omega(j,x)$ for all $x \notin X_{[t_a,t_b]}(\omega)$ and all $j\geq 0$.
\end{itemize}
\end{lemma}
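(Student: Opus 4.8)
The plan is to build $\omega'$ by ``splicing out'' the portion of the $\omega$-trajectory strictly between times $t_a$ and $t_b$, changing cookies only on the sites the $\omega$-walker visits during that portion. Let $(Y_i)_{i\ge 0}$ be the trajectory we want $X(\omega')$ to realize: $Y_i=X_i(\omega)$ for $0\le i\le t_a$, and $Y_{t_a+1+s}=X_{t_b+s}(\omega)$ for $s\ge 0$; thus $Y$ follows $X(\omega)$ up to time $t_a$, jumps directly from $a$ to $b$ (the step being legal since $\abs{a-b}\le L$), and then copies $X(\omega)$ from time $t_b$ onward. For a site $x$ put $d(x)=L_{t_b}(\omega,x)-L_{t_a+1}(\omega,x)$, the number of visits $X(\omega)$ makes to $x$ at times $t_a+1,\dots,t_b-1$; note $d(x)=0$ whenever $x\notin X_{[t_a,t_b]}(\omega)$. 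Define
\[
\omega'(j,x)=\begin{cases}
\omega(j,x), & 0\le j\le L_{t_a}(\omega,x)-1,\\
b-a, & x=a,\ j=L_{t_a}(\omega,a),\\
\omega(j+d(x),x), & j\ge L_{t_a+1}(\omega,x).
\end{cases}
\]
Since $L_{t_a+1}(\omega,x)=L_{t_a}(\omega,x)$ for $x\ne a$ and $L_{t_a+1}(\omega,a)=L_{t_a}(\omega,a)+1$, the three cases partition all pairs $(j,x)$, so $\omega'$ is well defined; it lies in $\Omega$ because every value used is a value of $\omega$ except $\omega'(L_{t_a}(\omega,a),a)=b-a\in[-L,L]$, and because shifting a stack by $d(x)$ leaves it eventually constant at index $M-1$ — the one overwritten slot is, under hypothesis (a), at index $\le M-2$ and hence harmless, while under hypothesis (b) the stack over $a$ already holds $b-a$ in that slot, so it is not changed.

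I would then verify (i)--(iii) by induction on time (the base case $i=0$ being trivial), maintaining the bookkeeping
\[
L_i(\omega',x)=L_i(\omega,x)\ \ (i\le t_a),\qquad L_{t_a+1+s}(\omega',x)=L_{t_b+s}(\omega,x)-d(x)\ \ (s\ge 0).
\]
For $i<t_a$ the $\omega'$-walker's step at a site $x$ uses index $L_i(\omega,x)\in\{0,\dots,L_{t_a}(\omega,x)-1\}$, where the first case keeps $\omega'=\omega$, so the walk agrees with $X(\omega)$ — this gives (i). At $i=t_a$ the walker sits at $a$ with visit count $L_{t_a}(\omega,a)$, so the second case sends it to $a+(b-a)=b=Y_{t_a+1}$. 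For $s\ge 0$ the walker is at $y=X_{t_b+s}(\omega)$ having visited $y$ exactly $L_{t_b+s}(\omega,y)-d(y)$ times; since $L$ is nondecreasing in time, $L_{t_b+s}(\omega,y)-d(y)\ge L_{t_b}(\omega,y)-d(y)=L_{t_a+1}(\omega,y)$, so the third case applies and the consumed cookie equals $\omega(L_{t_b+s}(\omega,y),y)$, which is precisely the cookie $X(\omega)$ consumes at time $t_b+s$; hence $X_{t_a+2+s}(\omega')=X_{t_b+s+1}(\omega)=Y_{t_a+2+s}$ and the bookkeeping carries over. This gives (ii). Finally (iii) is immediate: if $x\notin X_{[t_a,t_b]}(\omega)$ then $x\ne a$ and $d(x)=0$, so all three cases reduce to $\omega'(j,x)=\omega(j,x)$.

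I expect the one genuinely delicate point to be the index accounting at the site $a$: one must be sure that the (possibly) overwritten cookie at index $L_{t_a}(\omega,a)$ over $a$ is consumed by $X(\omega')$ exactly once — at time $t_a$ — and never collides with a cookie consumed earlier or after the splice. That it is not consumed before $t_a$ is clear (earlier visits to $a$ use indices $0,\dots,L_{t_a}(\omega,a)-1$), and that it is not consumed after $t_a$ follows from the inequality $L_{t_b+s}(\omega,a)-d(a)\ge L_{t_a+1}(\omega,a)=L_{t_a}(\omega,a)+1$ derived above, so post-splice the $\omega'$-walker uses only indices $\ge L_{t_a}(\omega,a)+1$ at $a$. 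This is exactly where the dichotomy (a)/(b) enters: placing the value $b-a$ at index $L_{t_a}(\omega,a)$ of the stack over $a$ is legitimate precisely when either that slot is still ``fresh'' (case (a): $L_{t_a}(\omega,a)<M-1$, so we are free to choose its value) or the $\omega$-walker already stepped from $a$ straight to $b$ using it (case (b): the slot already holds $b-a$, so $\omega'$ does not modify it).
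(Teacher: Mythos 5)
Your construction is essentially the paper's: splice out the detour between $t_a$ and $t_b$ by deleting the cookies that $X(\omega)$ consumes on it, rewire the cookie at $a$ at local time $L_{t_a}(\omega,a)$ to point at $b$, and verify (i)--(iii) by tracking local times, with the dichotomy (a)/(b) entering precisely to justify that rewiring. The one cosmetic difference is that you define the shift as $d(x)=L_{t_b}(\omega,x)-L_{t_a+1}(\omega,x)$, which treats the site $a$ uniformly (shifting its stack by $C_a-1$ where $C_x = L_{t_b}(\omega,x)-L_{t_a}(\omega,x)$), whereas the paper writes a separate case shifting the stack at $a$ by $C_a$; your index bookkeeping, which makes $L_{t_a+1+s}(\omega',x)=L_{t_b+s}(\omega,x)-d(x)$, is the cleaner and correct accounting.
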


\begin{proof}
Let $C_x = L_{t_b}(\omega,x) - L_{t_a}(\omega,x)$ be the number of times the site $x$ is visited by the sequence $X_{[t_a,t_b]}(\omega)$.  Under the assumption (a) we obtain the environment $\omega'$ from $\omega$ by removing the cookies visited by the walker $X(\omega)$ in the time interval $[t_a+1,t_b]$ and rewiring the top cookie at $a$ at time $t_a$ to point at $b$.  That is,
$$
\omega'(j,x) = \begin{cases}
\omega(j,x) & \hbox{for $x\notin X_{[t_a,t_b]}(\omega)$ and all $j\geq0$}\\
\omega(j,x) & \hbox{for $x\in X_{[t_a,t_b]}(\omega)$, $0\leq j < L_{t_a}(\omega,x)$} \\
\omega(j+C_x,x) & \hbox{for $x\in X_{[t_a,t_b]}(\omega)$, $x\neq a$, $j\geq L_{t_a}(\omega,x)$ } \\
\omega(j+C_a,a) & \hbox{for $x=a$, $j > L_{t_a}$} \\
b-a & \hbox{for $x = a$, $j = L_{t_a}(\omega,a)$}.
\end{cases}
$$
From the definition of $\omega'$, it is clear that (iii) is satisfied, and (i) is satisfied because, from the perspective of the walker, $\omega'$ and $\omega$ are identical up until time $t_a$.  Finally, (ii) is satisfied because the remaining environments at time $t_b$ in $X(\omega)$ and at time $t_a+1$ in $X(\omega')$ are identical.  The assumption (a)  guarantees that rewiring the top cookie at $a$ is allowed, since rewiring the $M^{\text{th}}$ cookie would require modifying all cookies $j\geq M-1$, which might affect the future path of the walk. If (b) is assumed instead of (a), then no rewiring is necessary since the cookie at $a$ at the time $t_a$ points to $b$ in both $\omega$ and $\omega'$, i.e. $\omega\left(L_{t_a}+C_a,a\right)=b-a$ and we can keep the same definition for $\omega'$ as when working under the assumption (a).
\end{proof}

\begin{proof}[Proof of Lemma \ref{main_lemma_decreasing}] Let $$\mathcal G(\omega)=\left\{\omega'\in \{T_{\lambda n}\leq n\}:\omega'\se{\lambda n, 2L\sqrt n}\omega\right\}.$$ Let $\sigma$ be the element (or one of the elements) of $\mathcal G(\omega)$ such that
 \begin{eqnarray}
 T_{\lambda n}(\sigma)=\min_{\omega'\in \mathcal G(\omega)}T_{\lambda n}(\omega').
 \label{minimality assumption}
\end{eqnarray}
The above $\sigma$ exists because $T_{\lambda n}(\omega')$ is an integer for each $\omega'$. 
If $X_{[0,T_{\lambda n}(\sigma)]}(\sigma)\cap (-\infty,-1]=\emptyset$ we may take $\omega'=\sigma$. Assume therefore that there exists $x<0$ such that $x\in X_{[0,T_{\lambda n}(\sigma)]}(\sigma)$.
Let us define \begin{eqnarray*}\alpha_1&=&\max\left\{0\leq k< V_{x}(\sigma): X_k(\sigma)>x\right\},\\
\beta_1&=&\min\left\{k>V_x(\sigma):X_k(\sigma)>x\right\}.
\end{eqnarray*}
The times $\alpha_1$ and $\beta_1$ are well defined because $x<0$ and $\sigma\in \{T_{\lambda n}\leq n\}$. 
Let $a_1=X_{\alpha_1}(\sigma)$ and $b_1=X_{\beta_1}(\sigma)$. Clearly, $a_1,b_1\in (x, x+L]$. 
Assume that $L_{\alpha_1}(\sigma,a_1)<M-1$. Then we can apply Lemma \ref{raising stacks} to $a=a_1$, $b=b_1$, $t_a=\alpha_1$, and $t_b=\beta_1$. The application of the lemma allows us to  obtain an environment $\sigma'\in \{T_{\lambda n}\leq n\}$ from the original environment $\sigma$ such that $\sigma'\se{\lambda n, 2L\sqrt n}\sigma$. Since one visit to $x$ is avoided in $\sigma'$ we would have $T_{\lambda n}(\sigma')\leq T_{\lambda n}(\sigma)-1$ which contradicts (\ref{minimality assumption}). Therefore we must have $L_{\alpha_1}(\sigma, a_1)\geq M-1$ (recall that this means the walker is visiting $a_1$ for at least the $M^{\text{th}}$ time). 
We now consider the sequence of times $V^1_{a_1}(\sigma)$, $V^2_{a_1}(\sigma)$, $\dots$, $V^{M}_{a_1}(\sigma)$  
at which the visits to $a_1$ have occurred. Assume that for some $s\in\{1,\dots, M-1\}$ we have $$X_{\left[V^s_{a_1}(\sigma), V^{s+1}_{a_1}(\sigma)\right]}(\sigma)\subseteq \left(-\infty, 2L\sqrt n\right].$$ By applying Lemma \ref{raising stacks} (with assumption (b)) to $a=X_{V^s_{a_1}(\sigma)-1}$, $b=a_1$, $t_a=V^s_{a_1}(\sigma)-1$, and $t_b=V^{s+1}_{a_1}$ we obtain an environment $\sigma'\in \{T_{\lambda n}<n\}$  such that 
$\sigma'\se{\lambda n, 2L\sqrt n}\sigma $ and $L_{T_{\lambda n}(\sigma')}(\sigma',a_1)\leq L_{T_{\lambda n}(\sigma)}(\sigma,a_1)-1$, which contradicts \eqref{minimality assumption}. 

Let us now define 
\begin{eqnarray*}
\alpha_2&=&\max\left\{0\leq k<V^2_{a_1}(\sigma): X_k(\sigma)>a_1\right\},\\
\beta_2&=&\min\left\{k>V^2_{a_1}(\sigma): X_k(\sigma)>a_1\right\}.
\end{eqnarray*}
We must have $a_1\leq L\sqrt n$. We can be certain that $\alpha_2$ is well defined because $X_{\left[V^1_{a_1}(\sigma),V^2_{a_1}(\sigma)\right]}\cap \left(2L\sqrt n,+\infty\right)\neq \emptyset$. The time $\beta_2$ is also well defined because $\sigma \in \{T_{\lambda n}\leq n\}$. Let $a_2=X_{\alpha_2}(\sigma)$ and $b_2=X_{\beta_2}(\sigma) $. According to the construction we must have $a_2, b_2\in (a_1,a_1+L]$.

Using the same argument as above we have that $L_{\alpha_2}(\sigma, a_2)\geq M-1$, and that for each $s\in\{1,2,\dots M-1\}$ we have $$X_{\left[V^s_{a_2}(\sigma),V^{s+1}_{a_2}(\sigma)\right]}(\sigma)\cap\left(2L\sqrt n, +\infty\right)\neq \emptyset.$$ Having defined $a_1 < \cdots < a_i$ and assuming that $a_i<L\sqrt n$ we inductively define the times $\alpha_{i+1}$ and $\beta_{i+1}$ in the following way: 
\begin{eqnarray*}
\alpha_{i+1}&=&\max\left\{0\leq k<V^2_{a_i}(\sigma): X_k(\sigma)>a_i\right\},\\
\beta_{i+1}&=&\min\left\{k>V^2_{a_i}(\sigma): X_k(\sigma)>a_i\right\}.
\end{eqnarray*}
Then we define $a_{i+1}=X_{\alpha_{i+1}}(\sigma)$ and $b_{i+1}=X_{\beta_{i+1}}(\sigma)$. Clearly, $a_{i+1},b_{i+1}\in (a_i,a_i+L]$. As above, we are certain that  
\begin{eqnarray}\label{all cookies eaten}L_{\alpha_{i+1}}(\sigma, a_{i+1})\geq M-1,\end{eqnarray} and for each $s\in\{1,2,\dots, M-1\}$ the following property holds: 
\begin{eqnarray}\label{far reaching excursion}X_{\left[V^s_{a_{i+1}}(\sigma),V^{s+1}_{a_{i+1}}(\sigma)\right]}(\sigma)\cap \left(2L\sqrt n, +\infty\right)\neq\emptyset.\end{eqnarray}


\begin{center}
\includegraphics[scale=0.5]{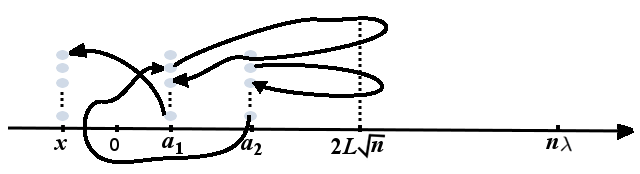} 
\end{center}

We can continue the induction until we have $x<a_1<\cdots<a_I$ where $I$ is the smallest index such that $a_I \geq L\sqrt{n}$.  Since $a_{i+1} - a_i \leq L$ for each $i \leq I-1$, we must have $I \geq \sqrt{n}$.  From \eqref{all cookies eaten}, we have that for each $i\le I-1$, before the second visit to the site $a_i$, the walk $X(\sigma)$ visits the site $a_{i+1}$ at least $M$ times. 
Furthermore, \eqref{far reaching excursion} implies that $V^{s+1}_{a_i}(\sigma)-V^s_{a_i}(\sigma)\geq \sqrt n$ for each $i\leq I$ and $s\leq M-1$, that is, the walk spends at least $\sqrt{n}$ steps between consecutive visits to each site $a_i$.


Now we will use our assumption that $M\geq 3$. We know that $a_1$ is visited at least three times before $x$ is visited for the first time. Between the second and third visit to $a_1$ the walk spent at least $\sqrt{n}$ steps. Therefore $V_x(\sigma)\geq V^3_{a_1}(\sigma)\geq V^2_{a_1}(\sigma)+ \sqrt n$. 
The second visit to $a_{1}$ has occurred after the  site $a_2$ is visited at least $M$ times, 
hence the second visit to $a_1$ occurred after the third visit to $a_2$. Therefore 
$V^2_{a_1}(\sigma)\geq V^3_{a_2}(\sigma)\geq V^2_{a_2}(\sigma)+\sqrt n$. Thus $V_x(\sigma)\geq V^2_{a_2}(\sigma)+2\sqrt n$. Since the second visit to $a_2$ occurred after $M$ visits to $a_3$ we know that the second visit to $a_2$ occurred after the third visit to $a_3$. Thus $V^2_{a_2}(\sigma)\geq V^3_{a_3}(\sigma)\geq V^2_{a_3}(\sigma)+\sqrt n$ and $V_x(\sigma)\geq V^2_{a_3}(\sigma)+3\sqrt n$. Continuing in this fashion, we obtain that $V_x(\sigma)\geq V^2_{a_I}(\sigma)+I\sqrt n\geq V^2_{a_I}(\sigma)+n\geq n$, which contradicts the assumption that $V_x(\sigma)<T_{\lambda n}(\sigma)\leq n$. This completes the proof of Lemma~\ref{main_lemma_decreasing}.
\end{proof}

\subsection{Large deviations} In this subsection we provide the proof to Theorem \ref{large_deviations}.
\begin{proof}[Proof of Theorem \ref{large_deviations}] It suffices to prove that $\prob{ A_{m+n}}\geq\prob{ A_n}\cdot \prob{A_m}$. 
We notice the following inclusion:
\begin{eqnarray*}
A_{n+m}&=&\left\{T_{\lambda (n+m)}\leq n, \inf \{X_k:0\leq k\leq T_{\lambda (n+m)}\}\geq 0\right\}\\
&\supseteq &A_n\cap \left\{T_{\lambda (n+m)}\leq n, \inf \{X_k:0\leq k\leq T_{\lambda (n+m)}\}\geq 0\right\}.\end{eqnarray*}
Let us define the walk $\hat X(\omega)$ for $\omega\in A_n\cap\left\{T_{\lambda n}\leq n, \inf \{X_k:0\leq k\leq T_{\lambda n}\}\geq 0\right\}$ in the following way: 
$\hat X_k(\omega)=X_{k+T_{\lambda n}}(\omega)-X_{T_{\lambda n}}(\omega)$. The walk $\hat X$ starts at $X_{T_{\lambda n}}$. In analogy to the stopping time $T_x$ for the walk $X$ we define  $\hat T_x$ for the walk $\hat X$. The precise definition is: 
$$\hat T_x(\omega)=T_{X_{T_{\lambda n}}+x}(\omega)-T_{\lambda n}(\omega).$$
In analogy to $A_n$ we define the event $\hat A_m$ for the walk $\hat X$:
$$\hat A_m=\left\{ \hat T_{\lambda m }\leq m, \inf\left\{\hat X_k:0\leq k\leq \hat T_{\lambda m}\right\}\geq 0
\right\}.$$
 On the event $A_n\cap \hat A_m$, by time $T_{\lambda n}+\hat T_{\lambda m}$ the walk $X$ reaches the site 
$X_{T_{\lambda n}}+\hat X_{\hat T_{\lambda m}}\geq \lambda \left(n+m\right)$. Therefore $A_n\cap \hat A_m\subseteq A_{n+m}$.  
We will now prove that $\prob {A_n\cap \hat A_m}=\prob{A_n}\cdot \prob{\hat A_m}$. For each $x\in[\lambda n, \lambda n+L]$, conditioned on $T_{\lambda n}=x$, the events $A_n$ and $\hat A_m$ are independent. Therefore
\begin{eqnarray*}
\prob{A_n\cap \hat A_m}&=&\sum_{x\in [\lambda n, \lambda n+L]}
\mathbb P\left(\left.A_n\cap \hat A_m\right|T_{\lambda n}=x\right)\cdot \prob{T_{\lambda n}=x}\\
&=&\sum_{x\in [\lambda n, \lambda n+L]}
\mathbb P\left(\left.A_n \right|T_{\lambda n}=x\right)\cdot\mathbb P\left(\left.  \hat A_m\right|T_{\lambda n}=x\right)\cdot \prob{T_{\lambda n}=x}.
\end{eqnarray*}
Since $\mathbb P\left(\hat A_m\left|T_{\lambda n}=x\right.\right)=\prob{\hat A_m}$ we obtain
\begin{eqnarray*}\prob{A_n\cap \hat A_m}&=&\mathbb P\left(  \hat A_m \right)\cdot \sum_{x\in [\lambda n, \lambda n+L]}
\mathbb P\left(\left.A_n \right|T_{\lambda n}=x\right)\cdot \prob{T_{\lambda n}=x}\\
&=&\prob{\hat A_m}\cdot \prob{A_n},
\end{eqnarray*}
which implies the  inequality
$$\prob{A_{n+m}}\geq \prob{A_n}\cdot \prob{A_m}$$ for all $n, m>0$.
The proof is completed using the inequalities (\ref{inequality 1}) and (\ref{inequality 2}) and Theorem \ref{main_bound}.
\end{proof}

\section{Case $L=2$ or $M=1$}
In the case when $L=2$ or the number of cookies per site is $0$, then we can obtain the exponential decay of probabilities $\prob{X_n\geq \lambda \xi(n)}$ for every positive function $\xi$ that satisfies $\xi(n)+\xi(m)\geq \xi(n+m)$. In particular this holds for $\xi(x)=x^{\theta}$ for $\theta\in(0,1)$. 
\begin{theorem} Let $\xi:\mathbb R_+\to\mathbb R_+$ be a positive super-additive function and assume that either $L= 2$ or $M=1$. Then there is a function $\varphi:\mathbb R_+\to \mathbb R$ such that for every $\lambda >0$ the following holds:
\begin{eqnarray}\label{limit_for_clt} \lim_{n\to\infty}\frac1n\log\prob{X_n\geq \lambda \xi(n)}=\varphi(\lambda).\end{eqnarray}
\end{theorem}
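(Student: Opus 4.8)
The plan is to rerun the proof of Theorem~\ref{large_deviations} with $\lambda n$ replaced throughout by $\lambda\xi(n)$, the crucial point being that when $L=2$ or $M=1$ the $O(\sqrt n)$ window in Lemma~\ref{main_lemma_decreasing} can be taken to be a finite constant $R=R(L,M)$, so the lossy factor $C^{\sqrt n}$ of Theorem~\ref{main_bound} becomes a genuine constant. Set
$$A_n:=\left\{T_{\lambda\xi(n)}\leq n,\ \inf_{k\leq T_{\lambda\xi(n)}}X_k\geq 0\right\}.$$
The elementary estimates (\ref{inequality 1}) and (\ref{inequality 2}) hold with $\lambda n$ replaced by $\lambda\xi(n)$, so it is enough to prove (a) $\prob{T_{\lambda\xi(n)}\leq n}\leq C\,\prob{A_n}$ for a constant $C=C(L,M,\mu)$, and (b) $\prob{A_{n+m}}\geq\prob{A_n}\,\prob{A_m}$ for all $n,m$. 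Granting these, Fekete's lemma applied to the super-additive sequence $\log\prob{A_n}\in[-\infty,0]$ gives $\tfrac1n\log\prob{A_n}\to\sup_k\tfrac1k\log\prob{A_k}=:\varphi(\lambda)$; combining the trivial bound $\prob{A_n}\leq\prob{T_{\lambda\xi(n)}\leq n}$ with (a), (\ref{inequality 1}) and (\ref{inequality 2}) gives $c_1\prob{A_n}\leq\prob{X_n\geq\lambda\xi(n)}\leq c_2\prob{A_n}$ for constants $0<c_1\leq c_2$, which transfers the limit to $\tfrac1n\log\prob{X_n\geq\lambda\xi(n)}$ and yields (\ref{limit_for_clt}). (Depending on the growth of $\xi$, $\varphi(\lambda)$ may be $0$, strictly negative, or $-\infty$ — the last when $\xi$ is super-linear — so the codomain should be read as $[-\infty,0]$.)

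Part (b) is a direct transcription of the proof of Theorem~\ref{large_deviations}: defining $\hat X_k=X_{k+T_{\lambda\xi(n)}}-X_{T_{\lambda\xi(n)}}$ and the event $\hat A_m$ exactly as there but with $\lambda m$ replaced by $\lambda\xi(m)$, on $A_n\cap\hat A_m$ the walk reaches $X_{T_{\lambda\xi(n)}}+\hat X_{\hat T_{\lambda\xi(m)}}\geq\lambda\xi(n)+\lambda\xi(m)\geq\lambda\xi(n+m)$ by time $T_{\lambda\xi(n)}+\hat T_{\lambda\xi(m)}\leq n+m$ without visiting a negative site; this single inequality is the only place super-additivity of $\xi$ is used. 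Hence $A_n\cap\hat A_m\subseteq A_{n+m}$, and conditioning on $\{X_{T_{\lambda\xi(n)}}=x\}$ for $x\in[\lambda\xi(n),\lambda\xi(n)+L]$, together with the independence of $A_n$ and $\hat A_m$ given that location, yields $\prob{A_n\cap\hat A_m}=\prob{A_n}\prob{A_m}$, which is (b).

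For (a), the only genuinely new ingredient, I would establish a finite-window strengthening of Lemma~\ref{main_lemma_decreasing}: when $L=2$ or $M=1$ there is $R=R(L,M)\in\mathbb N$ such that every $\omega\in\{T_{\lambda\xi(n)}\leq n\}$ admits $\omega'\in\{T_{\lambda\xi(n)}\leq n\}$ with $\omega'\se{\lambda\xi(n),R}\omega$ and $X_{[0,T_{\lambda\xi(n)}(\omega')]}(\omega')\cap(-\infty,-1]=\emptyset$. With this, the counting step of Theorem~\ref{main_bound} applies with its $\sqrt n$-dependent factors replaced by constants, since the construction only modifies cookies in $[0,R]$, giving (a) with a constant depending only on $L,M,\mu,R$. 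To get the finite window I would keep the minimality setup of the proof of Lemma~\ref{main_lemma_decreasing} — choose $\sigma\in\{T_{\lambda\xi(n)}\leq n\}$ with $\sigma\se{\lambda\xi(n),R}\omega$ minimizing $T_{\lambda\xi(n)}$, and if $\sigma$ still backtracks past $0$ form the increasing chain $x<a_1<a_2<\cdots$ with $a_{i+1}-a_i\leq L$, so that (\ref{all cookies eaten}) and (\ref{far reaching excursion}) hold with $R$ in place of $2L\sqrt n$. The new point, which is the crux, is that when $L=2$ every jump lies in $\{-2,-1,1,2\}$, and this restricts so severely how the interlocked excursions over $a_i,a_{i+1},\dots$ can be realized that, once the chain spans an interval longer than some bounded length $\ell_0=\ell_0(L,M)$, the cookies there are forced into a configuration in which all stacks point inward — a trapping region of the type exhibited in the proof of Theorem~\ref{weird_theorem} — contradicting $T_{\lambda\xi(n)}<\infty$. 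Hence the chain terminates, and one may take $R=2L\ell_0$. For $M=1$ the walk is self-avoiding up to $T_{\lambda\xi(n)}$ by the $0$-$1$-$\infty$ principle, and either the same blocking argument or the loop analysis of \cite{matic11} provides the bounded window. I expect this finitary case analysis for $L=2$ — ruling out arbitrarily long such chains using only jumps of size $1$ and $2$ — to be the main obstacle; everything else is a routine rewrite of Theorems~\ref{main_bound} and~\ref{large_deviations}.
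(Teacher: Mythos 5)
Your overall reduction is correct and matches the paper: rewrite (\ref{inequality 1})--(\ref{inequality 2}) with $\lambda\xi(n)$ in place of $\lambda n$, prove a finite-window version of Lemma~\ref{main_lemma_decreasing} so the factor in Theorem~\ref{main_bound} becomes a constant, establish $\prob{A_{n+m}}\ge\prob{A_n}\prob{A_m}$ via super-additivity of $\xi$, and apply Fekete. Part (b) of your argument is verbatim the paper's.

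The gap is in part (a), the finite-window lemma, where your proposed route differs from the paper's and does not close. If you run the chain construction of Lemma~\ref{main_lemma_decreasing} with a constant window $R$, the time lower bound you inherit is of order $I$ times the minimal excursion length, i.e.\ roughly $R^2/L^2$, which for fixed $R$ does not grow with $n$; so the contradiction with $T_{\lambda\xi(n)}(\sigma)\le n$ used in the $M\ge3$ proof is simply unavailable, and you would need $R=\Omega(\sqrt n)$ to recover it. Your fallback claim --- that a chain spanning more than some $\ell_0(L,M)$ sites forces a trapping configuration ``of the type in Theorem~\ref{weird_theorem}'' --- is not proved and I do not believe it can be: the chain is extracted from an environment $\sigma$ that by hypothesis satisfies $T_{\lambda\xi(n)}(\sigma)\le n$, so $\sigma$ demonstrably contains no trap that blocks the walk; the sites $a_i$ indeed have their final cookies pointing leftward, but property~(\ref{far reaching excursion}) says precisely that the walk escapes far to the right between visits using cookies at the intermediate sites, which is incompatible with those sites forming an inward-pointing trap.

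What the paper actually does for $L=2$ (Lemma~\ref{environment modification}) avoids the chain entirely. The observation is that with jumps of size at most $2$, the only sites from which the walk can enter $(-\infty,-1]$, and the only sites at which it can re-enter $[0,\infty)$, are $0$ and $1$: the ``gate'' is the fixed two-site set $\{0,1\}$, independent of $n$. Taking $\sigma$ minimizing $T_{\lambda\xi(n)}$ among $\omega'\se{\lambda\xi(n),2}\omega$, setting $\alpha=T_{(-\infty,0)}(\sigma)-1$ and $\beta$ the first return time to $[0,\infty)$, Lemma~\ref{raising stacks}(a) and minimality force $L_\alpha(\sigma,X_\alpha)\ge M-1$, whence $X_\alpha\ne X_\beta$ and $\{X_\alpha,X_\beta\}=\{0,1\}$. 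Since every negative excursion of $\sigma$ must then depart from $X_\alpha$ on its exhausted top cookie and re-enter at $X_\beta$, rewiring the single cookie $\sigma(M-1,X_\alpha)$ to point at $X_\beta$ deletes all negative excursions simultaneously, giving $\omega'\se{\lambda\xi(n),2}\omega$ with no backtracking. This is a one-cookie modification confined to $\{0,1\}$, a qualitatively different and much shorter argument than bounding the length of the $a_i$-chain; I would recommend replacing your chain analysis with this gate argument.
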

\begin{proof}
We will prove the theorem for the case $L=2$. The proof when $M=1$ is a simple generalization of the proof from the case of deterministic walks in random environments. First of all, the following inequality is obtained in the same way as in the proof of Theorem \ref{large_deviations}:
\begin{eqnarray}\label{ineq_incl1}
\liminf \frac1n\log\prob{T_{\lambda \xi(n)}\leq n}&\leq& \liminf \frac1n\log\prob{X_n\geq \lambda \xi(n)}\\ \limsup\frac1n\log\prob{X_n\geq \lambda \xi(n)}&\leq&\limsup \frac1n\log\prob{T_{\lambda \xi(n)}\leq n}.\label{ineq_incl2}
\end{eqnarray}
Let $A_n := \{T_{\lambda \xi(n)} \leq n, \inf_{k\leq T_{\lambda \xi(n)}}X_k\geq 0 \}$. 
We have $\prob{A_n}\leq \prob{T_{\lambda \xi(n)}\leq n}$.
We will now prove that $\prob{T_{\lambda \xi(n)}\leq n}\leq C\prob{A_n}$ for certain constant $C$ independent on $n$.  

\begin{lemma}\label{environment modification}For each $\omega\in \{T_{\lambda \xi(n)}\leq n\}$ there exists an $\omega' \in A_n$ such that 
$$\omega' \se{\lambda \xi(n),2}\omega.$$ 
\end{lemma}
\begin{proof} Assume the contrary and  consider the set
$$\mathcal G(\omega)= \left\{\omega'\in \{T_{\lambda \xi(n)}\leq n\}: \omega' \se{\lambda \xi(n),2} \omega\right\}$$ and an element $\sigma\in\mathcal G(\omega)$ such that 
$$T_{\lambda \xi(n)}(\sigma)=\min_{\omega'\in \mathcal G(\omega)} T_{\lambda \xi(n)}(\omega').$$

Let us define the following times: \begin{eqnarray*}\alpha&=&\sup\{k<T_{(-\infty,0)}(\sigma):X_k(\sigma)\geq 0\},\\ \beta&=&\inf\{k> \alpha: X_k(\sigma)\geq 0\}.\end{eqnarray*} Clearly, $X_{\alpha}, X_{\beta}\in\{0,1\}$ since $L=2$. If 
$L_{\alpha}(\sigma, X_{\alpha})< M-1$, then we can apply Lemma \ref{raising stacks} to $a=X_{\alpha}$, $b=X_{\beta}$, $t_a=\alpha$, and $t_b=\beta$. We obtain an environment $\sigma'\se{\lambda \xi(n),2}\sigma$ in which at least one visit to $(-\infty, 0)$ is avoided implying that $T_{\lambda \xi(n)(\sigma')}<T_{\lambda \xi(n)}(\sigma)$. Therefore $L_{\alpha}(\sigma, X_{\alpha})\geq M-1$ which implies that $X_{\alpha}\neq X_{\beta}$ and $\{X_{\alpha},X_{\beta}\}=\{0,1\}$. Let $\sigma'$ be the environment obtained from $\sigma$ in the following way:
\begin{eqnarray*}
\sigma'(j,x)= \begin{cases}
\sigma(j,x) & \mbox{for }(j,x)\neq (M-1,X_{\alpha})\\
X_{\beta}-X_{\alpha}& \mbox{for } (j,x)=(M-1,X_{\alpha}).\end{cases}
\end{eqnarray*}
Since every visit to $(-\infty,0)$ in $\sigma$ must start from $X_{\alpha}$ and end at $X_{\beta}$ we conclude that $\sigma'\se{\lambda \xi(n),2}\sigma$ and  $X_{0, T_{\lambda \xi(n)}(\sigma')}\cap (-\infty,0)=\emptyset$ which contradicts our minimality assumption on $\sigma$. This completes the proof of Lemma \ref{environment modification}.
\end{proof}
In the same way as in the proof of inequality (\ref{main inequality}) we now establish $$\prob{T_{\lambda \xi(n)}\leq n}\leq C\prob{A_n}.$$ An argument analogous to the one presented in the proof of Theorem \ref{large_deviations} allows us to prove the existence of the function $\varphi$ such that 
 $$\lim_{n\to\infty}\frac1n\log\prob{A_n}=\varphi(\lambda).$$  
The inequalities (\ref{ineq_incl1}) and (\ref{ineq_incl2}) allow us to conclude (\ref{limit_for_clt}).
\end{proof}

\section{Properties of the rate function}\label{concave}
The next theorem states that the rate function $\phi$ from (\ref{main_limit}) is concave in $\lambda$. 
\begin{theorem}\label{concavity of the rate} Assume that $\alpha,\beta>0$ are real numbers such that $\alpha+\beta=1$. Then for any $\lambda$, $\mu>0$ the following inequality holds $$\phi(\alpha\lambda+\beta\mu)\geq \alpha \phi(\lambda)+\beta\phi(\mu).$$
Moreover, $\phi(0)=0$, $\phi(\lambda)<0$ for $\lambda> 0$, and $\phi(\lambda)=-\infty$ for $\lambda>L$.
\end{theorem}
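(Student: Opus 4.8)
I would dispose of the three quick claims first. Since $|X_{t+1}-X_t|\le L$ forces $X_n\le Ln$, we get $\prob{X_n\ge\lambda n}=0$ and hence $\phi(\lambda)=-\infty$ for $\lambda>L$ (this is how $\phi$ is extended beyond $[0,L]$). On the event $\{\omega(0,0)=0\}$ the walk is frozen at the origin, so $\mu(0)\le\prob{X_n\ge0}\le1$ and therefore $\tfrac1n\log\prob{X_n\ge0}\to0$, i.e.\ $\phi(0)=0$. Finally, for $\lambda>0$, on $\{X_n\ge\lambda n\}$ the walk must have entered the annulus $A_k$ for every $k$ with $(k+1)L\le\lambda n$, so Lemma~\ref{lemma_bound_annulus} gives $\prob{X_n\ge\lambda n}\le\prob{T_{A_{k_n}}<\infty}\le c^{k_n}$ for large $n$ with $k_n=\lceil\lambda n/L\rceil-2$, whence $\phi(\lambda)\le(\lambda/L)\log c<0$.

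For the concavity, which should follow as usual from a sub-additive structure, I would first record the identity $\phi(\lambda)=\lim_n\tfrac1n\log\prob{A_n^{(\lambda)}}=\sup_n\tfrac1n\log\prob{A_n^{(\lambda)}}$, where $A_n^{(\lambda)}:=\{T_{\lambda n}\le n,\ \inf_{k\le T_{\lambda n}}X_k\ge0\}$. The first equality follows by sandwiching $\prob{X_n\ge\lambda n}$ between $\mathrm{const}\cdot\prob{A_n^{(\lambda)}}$ and $C^{\sqrt n}\prob{A_n^{(\lambda)}}$ using (\ref{inequality 1}), (\ref{inequality 2}) and Theorem~\ref{main_bound}; the second is Fekete's lemma applied to the super-additive sequence $\log\prob{A_n^{(\lambda)}}$, super-additivity $\prob{A_{n+m}^{(\lambda)}}\ge\prob{A_n^{(\lambda)}}\prob{A_m^{(\lambda)}}$ being exactly what is proved inside the proof of Theorem~\ref{large_deviations}. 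For $\lambda\le L$ these quantities are finite, since forcing the top cookies on $[0,\lceil\lambda\rceil n]$ to equal $\lceil\lambda\rceil\le L$ shows $\prob{A_n^{(\lambda)}}>0$.

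Now fix $\alpha,\beta>0$ with $\alpha+\beta=1$ and $\lambda,\mu>0$, and after relabelling assume $\lambda\ge\mu$. Put $n_1=\lceil\alpha n\rceil$, $n_2=n-n_1$, and $d:=\lambda n_1+\mu n_2=\mu n+(\lambda-\mu)n_1\ge(\alpha\lambda+\beta\mu)n$; note $n_1/n\to\alpha$, $n_2/n\to\beta$, and $n_1,n_2\to\infty$. I would then rerun the sub-additivity argument of Theorem~\ref{large_deviations} with two speeds: let $\hat X_k:=X_{k+T_{\lambda n_1}}-X_{T_{\lambda n_1}}$ be the walk restarted from the site $X_{T_{\lambda n_1}}\in[\lambda n_1,\lambda n_1+L]$ --- which on the event $A_{n_1}^{(\lambda)}$ is visited for the first time at $T_{\lambda n_1}$, hence carries a full stack of unused cookies --- and let $\hat A_{n_2}^{(\mu)}$ be the event $A_{n_2}^{(\mu)}$ read off from $\hat X$. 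On $A_{n_1}^{(\lambda)}\cap\hat A_{n_2}^{(\mu)}$ the walk reaches a site $\ge d$ by time $\le n_1+n_2=n$, so this event is contained in $\{T_d\le n\}$; conditioning on the value of $X_{T_{\lambda n_1}}$ makes $A_{n_1}^{(\lambda)}$ (measurable with respect to cookies at sites strictly left of $X_{T_{\lambda n_1}}$) and $\hat A_{n_2}^{(\mu)}$ (measurable with respect to the untouched cookies at sites $\ge X_{T_{\lambda n_1}}$) independent, and translation invariance gives $\prob{\hat A_{n_2}^{(\mu)}\mid X_{T_{\lambda n_1}}=x}=\prob{A_{n_2}^{(\mu)}}$; hence $\prob{T_d\le n}\ge\prob{A_{n_1}^{(\lambda)}\cap\hat A_{n_2}^{(\mu)}}=\prob{A_{n_1}^{(\lambda)}}\prob{A_{n_2}^{(\mu)}}$. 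Since $\{T_d\le n\}$ is measurable with respect to cookies at sites $<d$, inequality (\ref{inequality 2}) applied with $d$ in place of $\lambda n$ yields $\prob{X_n\ge d}\ge C'\prob{T_d\le n}$ for a constant $C'$ independent of $n$; combined with $\{X_n\ge d\}\subseteq\{X_n\ge(\alpha\lambda+\beta\mu)n\}$ this gives $\prob{X_n\ge(\alpha\lambda+\beta\mu)n}\ge C'\prob{A_{n_1}^{(\lambda)}}\prob{A_{n_2}^{(\mu)}}$. Taking $\tfrac1n\log$ and $n\to\infty$, and using the previous paragraph together with Theorem~\ref{large_deviations} for the left side, gives $\phi(\alpha\lambda+\beta\mu)\ge\alpha\phi(\lambda)+\beta\phi(\mu)$; when $\lambda>L$ or $\mu>L$ the right side is $-\infty$ and the inequality is trivial.

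The only delicate point is the conditional independence of $A_{n_1}^{(\lambda)}$ and $\hat A_{n_2}^{(\mu)}$ once the two velocities differ, and it survives for the same reason as in the one-velocity proof of Theorem~\ref{large_deviations}: the event $A_{n_1}^{(\lambda)}$ confines the walk to $[0,X_{T_{\lambda n_1}}]$ until time $T_{\lambda n_1}$ and makes $X_{T_{\lambda n_1}}$ a first-visit site, so the restarted walk of the second stage begins with a full stack of fresh cookies and reads only cookies at sites $\ge X_{T_{\lambda n_1}}$. Beyond this the argument is bookkeeping layered on top of the monotonicity results (Lemma~\ref{main_lemma_decreasing}, Theorem~\ref{main_bound}) and the sub-additivity already established, so I do not expect a genuine obstacle.
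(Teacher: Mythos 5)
Your proof is essentially correct and takes nearly the same route as the paper's, with one genuine streamlining and one small slip worth flagging.

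The streamlining is in the concavity argument. The paper approximates $\alpha,\beta$ by rationals $\alpha_n,\beta_n$ chosen so that $n\alpha_n,n\beta_n\in\mathbb N$ and $\alpha_n\le\alpha$, $\beta_n\le\beta$, and then splices together \emph{three} walks $X,\hat X,\tilde X$, using the third to bridge the $O(1)$ gap $(\alpha\lambda+\beta\mu-\alpha_n\lambda-\beta_n\mu)n\le 2$; this forces the extra $\mu_{\min}^2$ factor in \eqref{ineq_concavity}. You instead take $n_1=\lceil\alpha n\rceil$, $n_2=n-n_1$, observe that (after relabelling so $\lambda\ge\mu$) the target $d=\lambda n_1+\mu n_2$ already satisfies $d\geq(\alpha\lambda+\beta\mu)n$, and hence need only \emph{two} stages with no remainder term. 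Both approaches rest on the same two ingredients — super-multiplicativity $\prob{A_{n+m}^{(\lambda)}}\geq\prob{A_n^{(\lambda)}}\prob{A_m^{(\lambda)}}$ extracted from the proof of Theorem~\ref{large_deviations}, and the conditional independence/translation-invariance argument around $X_{T_{\lambda n_1}}$ — so the difference is purely bookkeeping; your version is tidier, the paper's handles rounding by adding a bounded tail step.

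The slip: you write that on $\{\omega(0,0)=0\}$ the walk is frozen at the origin, giving $\prob{X_n\geq0}\geq\mu(0)$. That is false — with only the top cookie at $0$ equal to $0$, the walk returns to $0$ at time $1$, consumes the next cookie $\omega(1,0)$, and moves if that cookie is nonzero. To freeze the walk at the origin you need $\omega(j,0)=0$ for all $j\in\{0,\dots,M-1\}$, which is exactly the paper's event and gives the bound $\prob{X_n\geq0}\geq\mu(0)^M$. This does not affect the conclusion $\phi(0)=0$ (the exponent $M$ is a constant), but the reasoning as stated is incorrect. Everything else — the $\lambda>L$ case, the annulus bound for $\phi(\lambda)<0$ with $k_n\sim\lambda n/L$, the finiteness via forcing top cookies to jump right, and the passage from $\prob{A_n^{(\lambda)}}$ to $\phi$ via \eqref{inequality 1}, \eqref{inequality 2}, Theorem~\ref{main_bound} and Fekete — matches the paper.
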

\begin{proof} Assume that $(\alpha_n)_{n=1}^{\infty}$ and $(\beta_n)_{n=1}^{\infty}$ are sequences of rational numbers for which $n\alpha_n$, $n\beta_n\in\mathbb N$, $0\leq \alpha-\alpha_n\leq\min\left\{\frac1{2\lambda n},\frac1n\right\}$, and $0\leq\beta-\beta_n\leq \min\left\{\frac1{2\mu n},\frac1n\right\}$. 
Notice that \begin{eqnarray*}&&\left\{T_{(\alpha\lambda+\beta\mu)n}\leq n, \inf_{0\leq k\leq T_{(\alpha\lambda+\beta\mu)n}}X_k\geq 0\right\}\\&\supseteq&
\left\{T_{(\alpha\lambda+\beta\mu)n}\leq n, \inf_{0\leq k\leq T_{(\alpha\lambda+\beta\mu)n}}X_k\geq 0, T_{\alpha_n\lambda n}\leq \alpha_n n \right\}.
\end{eqnarray*}
Analogously as in the proof of Theorem \ref{large_deviations} we define the process $\hat X$ as $$\hat X_k(\omega)=X_{k+T_{\alpha_n\lambda n}}(\omega)-X_{T_{\alpha_n\lambda n}}(\omega).$$ Also, denote by $\hat T_x$ the hitting time of the walk $\hat X$, i.e. $$\hat T_x=T_{X_{T_{\alpha_n\lambda n}}+x}-T_{\alpha_n\lambda n}.$$ We now obtain 
\begin{eqnarray} \nonumber&&
\left\{T_{(\alpha\lambda+\beta\mu)n}\leq n, \inf_{0\leq k\leq T_{(\alpha\lambda+\beta\mu)n}}X_k\geq 0, T_{\alpha_n\lambda n}\leq \alpha_n n \right\}\\
 \label{incl}&\supseteq& 
\left\{T_{(\alpha\lambda+\beta\mu)n}\leq n, \inf_{0\leq k\leq T_{(\alpha\lambda+\beta\mu)n}}X_k\geq 0, T_{\alpha_n\lambda n}\leq \alpha_n n, \right.\\ \nonumber &&\left. 
\inf_{0\leq k\leq T_{\alpha_n\lambda n}}X_k\geq 0, 
\hat T_{\beta_n\mu n}\leq \beta_nn,
\inf_{0\leq k\leq \hat T_{\beta_n\mu n}}\hat X_k\geq 0
\right\}.
\end{eqnarray}
Let us define by $\tilde X$ the walk defined as $\tilde X_k=\hat X_{k+\hat T_{\beta_n\mu n}}-\hat X_{\hat T_{\beta_n\mu n}}$, and by $\tilde T_x$ the stopping time $$\tilde T_x=\hat T_{\hat X_{\hat T_{\beta_n\mu n}}+x}-\hat T_{\beta_n\lambda n}.$$ Now we can conclude from the inclusion (\ref{incl}) that
\begin{eqnarray*} \nonumber&&
\left\{T_{(\alpha\lambda+\beta\mu)n}\leq n, \inf_{0\leq k\leq T_{(\alpha\lambda+\beta\mu)n}}X_k\geq 0, T_{\alpha_n\lambda n}\leq \alpha_n n \right\}\\
&\supseteq&
\left\{T_{(\alpha\lambda+\beta\mu)n}\leq n, \inf_{0\leq k\leq T_{(\alpha\lambda+\beta\mu)n}}X_k\geq 0, T_{\alpha_n\lambda n}\leq \alpha_n n, \right.\\ \nonumber &&\left. 
\inf_{0\leq k\leq T_{\alpha_n\lambda n}}X_k\geq 0, 
\hat T_{\beta_n\mu n}\leq \beta_nn,
\inf_{0\leq k\leq \hat T_{\beta_n\mu n}}\hat X_k\geq 0,\right.\\ &&\left. 
\tilde T_{(\alpha\lambda+\beta\mu-\alpha_n\lambda-\beta_n\mu)n}\leq (1-\alpha_n-\beta_n)n,\inf_{0\leq k\leq \tilde T_{(\alpha\lambda+\beta\mu-\alpha_n\lambda-\beta_n\mu)n}} \tilde X_k\geq 0
\right\} \\
&=&
\left\{T_{\alpha_n\lambda n}\leq \alpha_n n, 
\inf_{0\leq k\leq T_{\alpha_n\lambda n}}X_k\geq 0, 
\hat T_{\beta_n\mu n}\leq \beta_nn,
\inf_{0\leq k\leq \hat T_{\beta_n\mu n}}\hat X_k\geq 0,\right.\\ &&\left. 
\tilde T_{(\alpha\lambda+\beta\mu-\alpha_n\lambda-\beta_n\mu)n}\leq (1-\alpha_n-\beta_n)n,\inf_{0\leq k\leq \tilde T_{(\alpha\lambda+\beta\mu-\alpha_n\lambda-\beta_n\mu)n}} \tilde X_k\geq 0
\right\}.
\end{eqnarray*}
From our choice of sequences $(\alpha_n)_{n=1}^{\infty}$ and $(\beta_n)_{n=1}^{\infty}$ we derive the following two inequalities \begin{eqnarray*}\left(\alpha\lambda+\beta\mu-\alpha_n\lambda-\beta_n\mu\right)n&\leq &2,\;\mbox{ and }\\(1-\alpha_n-\beta_n)n&\leq& 2.\end{eqnarray*} The previous inequalities imply \begin{eqnarray*}\prob{\tilde T_{(\alpha\lambda+\beta\mu-\alpha_n\lambda-\beta_n\mu)n}\leq (1-\alpha_n-\beta_n)n,\inf_{0\leq k\leq \tilde T_{(\alpha\lambda+\beta\mu-\alpha_n\lambda-\beta_n\mu)n}} \tilde X_k\geq 0}&\geq& \mu_{\min}^2.\end{eqnarray*} Since the walks $X, \hat X$ and $\tilde X$ occupy disjoint parts of the environment (on the events that there are no backtrackings to the left of $0$), by independence we obtain \begin{eqnarray*}&&
\prob{ T_{(\alpha\lambda+\beta\mu)n}\leq n, \inf_{0\leq k\leq T_{(\alpha\lambda+\beta\mu)n}}X_k\geq 0}
\\&\geq& \prob{T_{\alpha_n\lambda n}\leq \alpha_n n,
\inf_{0\leq k\leq T_{\alpha_n\lambda n}}X_k\geq 0}\\ &&\times \ 
\prob{ \hat T_{\beta_n\mu n}\leq \beta_nn,
\inf_{0\leq k\leq \hat T_{\beta_n\mu n}}\hat X_k\geq 0
} \mu_{\min}^2.
\end{eqnarray*}
Taking logarithms of both sides of the last inequality, dividing by $n$, and taking the limit as $n\to \infty$ we conclude 
\begin{eqnarray}\label{ineq_concavity}
\phi(\alpha\lambda+\beta\mu)&\geq&\lim_{n\to\infty}\frac1n\log\left(\mu_{\min}^2\right)\\
\nonumber
&&+\lim_{n\to\infty}\frac1n \prob{T_{\alpha_n\lambda n}\leq \alpha_n n,
\inf_{0\leq k\leq T_{\alpha_n\lambda n}}X_k\geq 0}\\
\nonumber
&&+ \lim_{n\to\infty}\frac1n\log\prob{ \hat T_{\beta_n\mu n}\leq \beta_nn,
\inf_{0\leq k\leq \hat T_{\beta_n\mu n}}\hat X_k\geq 0
}.
\end{eqnarray}
The first limit on the right-hand side of the last inequality is equal to $0$. For the second limit we use that $\alpha_n n$ is a positive integer, hence 
\begin{eqnarray*}
&&\lim_{n\to\infty}\frac1n \prob{T_{\alpha_n\lambda n}\leq \alpha_n n,
\inf_{0\leq k\leq T_{\alpha_n\lambda n}}X_k\geq 0}\\&=&
\lim_{n\to\infty}\frac{\alpha_n}{\alpha_nn} \prob{T_{\alpha_n\lambda n}\leq \alpha_n n,
\inf_{0\leq k\leq T_{\alpha_n\lambda n}}X_k\geq 0}\\&=&\lim_{n\to\infty}\alpha_n\cdot \lim_{n\to\infty}\frac1{\alpha_nn} \prob{T_{\alpha_n\lambda n}\leq \alpha_n n,
\inf_{0\leq k\leq T_{\alpha_n\lambda n}}X_k\geq 0}\\&=&\alpha \phi(\lambda).
\end{eqnarray*}
Similarly we obtain that the last term on the right-hand side of (\ref{ineq_concavity}) is equal to $\beta\phi(\mu)$ which completes the proof of the convexity.

The equality $\phi(\lambda)=-\infty$ for $\lambda>L$ is trivial because  $\prob{X_n> Ln}=0$. We will now prove the equality $\phi(0)=0$. The event $\{X_n\geq 0\}$ contains the event $\{\omega(i,0)=0$ for all $i\in\{0,1,2,\dots, M-1\}\}$, that is, the event that all cookies at $0$ point to $0$. The probability of this event is greater than $\mu_{\min}^M$, hence $\prob{X_n\geq 0}\geq \mu_{\min}^M$. On the other hand, the complement of $\{X_n\geq 0\}$ contains the event that the first cookie at $0$ points to $-1$ and all cookies at $-1$ are equal to $0$. Thus, $\prob{X_n\geq 0}\leq 1-\mu_{\min}^{M+1}$, and we conclude that $$0=\lim_{n\to\infty}\frac1{n}\log \mu_{\min}^{M}\leq \lim_{n\to\infty}\frac1n\log\prob{X_n\geq 0}\leq\lim_{n\to\infty}\frac1{n}\log \left(1-\mu_{\min}^{M+1}\right)=0.$$

Assume now that $\lambda\in(0,L]$. Let $k=\lfloor \lambda n\rfloor$. We will prove that $\phi(\lambda)\in (-\infty, 0)$ using Lemma \ref{lemma_bound_annulus}. Let $A_k= [-(k+1)L,(k+1)L]\setminus [-kL,kL]$. Since $\{X_n\geq \lambda n\}\subseteq \{T_{A_k}<+\infty\}$ we use Lemma \ref{lemma_bound_annulus} to conclude that $\prob{X_n\geq \lambda n}\leq c^k$ for some constant $c$. For sufficiently large $n$ we have that $\lfloor \lambda n\rfloor\geq \frac{n\lambda}2$ hence $\prob{X_n\geq \lambda n}\leq \left(c^{\lambda /2}\right)^n$. This implies that 
$$\phi(\lambda)\leq\lim_{n\to\infty}\frac1n\log  \left(c^{\lambda /2}\right)^n<0.$$
The finiteness of $\phi(\lambda)$ follows from the fact that $\{T_{\lambda n}\leq n\}$ contains the event $$G=\left\{\omega(0,0)=\omega(0,L)=\omega(0,2L)=\cdots=\omega(0,nL)=L\right\},$$ which is the event that the top cookies at each of the sites $0$, $L$, $2L$, $\dots$, $nL$ point to the location that is $L$ units to its right. The probability of the last event is at least $\mu_{\min}^n$ hence $\phi(\lambda)\geq \log\mu_{\min}$. This completes the proof of the theorem.
\end{proof}

	\bibliographystyle{abbrv}
	\bibliography{cookies}
\end{document}